\apptocmd{\thebibliography}{\setlength{\itemsep}{5pt}}{}{}
\newcommand{\xmark}{\ding{55}}
\renewcommand{\emptyset}{\font\cmsy = cmsy10 at 10pt
 \hbox{\cmsy \char 59}
}
\tikzstyle{blackDot}=[inner sep=0mm, minimum size=1.5mm, draw=black, shape=circle, draw=white, fill=black, line width=0.2mm]
\tikzstyle{boxSquare}=[fill=white, draw=black, shape=rectangle, minimum width=7mm, minimum height=5mm, font={\scriptsize}]
\tikzstyle{boxBroad}=[fill=white, draw=black, shape=rectangle, minimum width=10mm, minimum height=5mm, font={\scriptsize}]
\tikzstyle{boxSmall}=[fill=white, draw=black, shape=rectangle, minimum width=5mm, minimum height=5mm, font={\scriptsize}]
\tikzstyle{emptyText}=[fill=none, draw=none, shape=circle, font={\tiny}]
\tikzstyle{emptyTextScripsize}=[fill=none, draw=none, shape=circle, font={\scriptsize}]
\tikzstyle{state}=[fill=white, draw=black, regular polygon, regular polygon sides=3, minimum width=0.8cm, shape border rotate=180, inner sep=0pt, font={\scriptsize}]
\tikzstyle{stateLarge}=[fill=white, draw=black, regular polygon, regular polygon sides=3, minimum width=1cm, shape border rotate=180, inner sep=0pt, font={\scriptsize}]
\tikzstyle{box very broad}=[fill=white, draw=black, shape=rectangle, minimum width=20mm]
\tikzstyle{stateReverse}=[fill=white, draw=black, regular polygon, regular polygon sides=3, minimum width=0.8cm, inner sep=0pt, font={\scriptsize}]
\tikzstyle{whiteDot}=[fill=white, draw=black, shape=circle, minimum size=1.2mm, shape=circle, inner sep=0mm]
\tikzstyle{edge}=[fill=none, draw=black, line width=0.65pt, -]
\tikzstyle{white edge}=[-, draw=white, line width=1.5mm]
\tikzstyle{ddashed}=[-, draw=black, dashed]
\tikzstyle{doubleline}=[-, double, draw=black, fill=none]
\tikzstyle{redLine}=[-, draw=red]
\tikzstyle{blueLine}=[-, draw=blue]
\newtheorem{theorem}{Theorem}
\newtheorem{lemma}[theorem]{Lemma}
\newtheorem{corollary}[theorem]{Corollary}
\newtheorem{proposition}[theorem]{Proposition}
\newtheorem{definition}[theorem]{Definition}
\newtheorem{question}[theorem]{Question}
\newtheorem{assumption}[theorem]{Assumption}
\newtheorem{notation}[theorem]{Notation}
\theoremstyle{definition}
\newtheorem{example}[theorem]{Example}
\newtheorem{remark}[theorem]{Remark}
\newcommand\diff[1]{\, \mathrm{d}#1}
\newcommand{\Del}[1]{\mathrm{\mathsf{del}}_{#1}}
\newcommand{\Copy}[1]{\mathrm{\mathsf{copy}}_{#1}}
\newcommand{\cat}[1]{\mathbf{#1}}
\newcommand{\In}{\mathsf{in}}
\newcommand{\Out}{\mathsf{out}}
\newcommand{\norm}{\mathsf{norm}}
\newcommand{\An}{\mathsf{An}}
\newcommand{\Dec}{\mathsf{Dec}}
\newcommand{\Cut}{\mathsf{Cut}}
\newcommand{\thickhline}{
    \noalign {\ifnum 0=`}\fi \hrule height 1.5pt
    \futurelet \reserved@a \@xhline
}
\newcolumntype{"}{@{\hskip\tabcolsep\vrule width 1pt\hskip\tabcolsep}}
\begin{document}

\title{The \texorpdfstring{$d$}{d}-separation criterion in Categorical Probability}

\author{Tobias Fritz}
\address{Department of Mathematics, Technikerstr.\ 13,  A-6020 Innsbruck, Austria}
\email{tobias.fritz@uibk.ac.at}

\author{Andreas Klingler}
\address{Institute for Theoretical Physics, Technikerstr.\ 21a,  A-6020 Innsbruck, Austria}
\email{andreas.klingler@uibk.ac.at}

\date{\today}

\subjclass[2020]{18M30, 62A09 (Primary), 18M35, 60A05, 62D20 (Secondary)}

\thanks{\emph{Acknowledgments.} We thank Rob Spekkens for the original idea behind the definition of categorical $d$-separation, Patrick Forr\'e for discussion and pointers to the literature, and Areeb Shah-Mohammed for pointing out a typo in \Cref{def:stringDiagram}. AK acknowledges support from the Stand Alone project P33122-N of the Austrian Science Fund (FWF) and funding of the Austrian Academy of Sciences
(\"OAW) through a DOC scholarship.}

\begin{abstract}%
The $d$-separation criterion detects the compatibility of a joint probability distribution with a directed acyclic graph through certain conditional independences. In this work, we study this problem in the context of categorical probability theory by introducing a categorical definition of causal models, a categorical notion of $d$-separation, and proving an abstract version of the $d$-separation criterion. This approach has two main benefits. First, categorical $d$-separation is a very intuitive criterion based on topological connectedness. Second, our results apply both to measure-theoretic probability (with standard Borel spaces) and beyond probability theory, including to deterministic and possibilistic networks. It therefore provides a clean proof of the equivalence of local and global Markov properties with causal compatibility for continuous and mixed random variables as well as deterministic and possibilistic variables.
\end{abstract}

\maketitle

\setcounter{tocdepth}{1}
\tableofcontents

\section{Introduction}

The $d$-separation criterion \cite{Pe09} is a necessary and sufficient condition for the compatibility of a probability distribution with a causal
structure in the form of a directed acyclic graph (DAG). 
It states that a joint probability distribution of a collection of random variables is compatible with the DAG---in the sense that each of its nodes is one of the given variables, and each arrow denotes the possibility of causal influence---if and only if the distribution satisfies a list of conditional independence relations encoded in the structure of the DAG.

In this paper, we study this causal compatibility problem in the framework of \emph{categorical probability theory}. We elaborate on the framework of generalized causal models recently proposed in \cite{Fr22}, introduce a categorical notion of $d$-separation, and prove a categorical generalization of the $d$-separation criterion.

The framework of generalized causal models involves freely generated categories called \emph{free Markov categories}~\cite{Fr22}. 
Starting from a set of morphisms as building blocks (representing the basic causal mechanisms), we construct a morphism in this category by assembling these blocks consistently. 
More precisely, a morphism in such a free category is a \emph{string diagram} consisting of wires and boxes. In our context, these diagrams represent the causal models. Each wire corresponds to a local random variable and each box to a causal mechanism generating one or several new output variables from its input ones.
A morphism in any Markov category is \emph{compatible} with such a causal model if it can be decomposed in the form specified by the causal model; this generalizes the standard factorization definition of Bayesian networks~\cite{Fo13}.

Further, we define a categorical notion of $d$-separation in terms of the string diagrams. A causal structure displays $d$-separation of a collection of outputs $\mathcal{Z}$ if it becomes disconnected upon removing the corresponding wires in $\mathcal{Z}$. We show that this is equivalent to classical $d$-separation for the class of causal models for which the latter is defined, namely those corresponding to DAGs. 

Finally, we prove an abstract version of the $d$-separation criterion for our categorical notions of causal model and $d$-separation. We show that a given distribution, possibly depending on additional input variables, is compatible with a causal structure if and only if it displays conditional independence between sets of variables whenever the causal model displays $d$-separation for the corresponding sets of wires. This result goes beyond the classical result for ``distributions''; it holds for arbitrary morphisms in any suitable Markov category $\cat{C}$. A central structural ingredient in the proof is the assumption that $\cat{C}$ has conditionals \cite{Fr20}. Intuitively, this property states that every Markov kernel with multiple output variables arises by generating one output variable after the other, namely as a function of the input variables and previously generated outputs.
For example, Markov kernels between finite sets have conditionals since every conditional probability distribution $p(x,y \, | \, a)$ factorizes into
$$ p(x,y \, | \, a) = p(x \, | \, y,a) \cdot p(y \, | \, a).$$
The conditional distribution $p(x \,| \, y,a)$ arises by solving this equation for every $x,y$, and $a$ and taking it to be arbitrary when $p(y \, | \, a) = 0$.
Conditionals also exist for continuous or mixed random variables (technically for Markov kernels between standard Borel spaces, but not arbitrary measurable spaces) as well as in the Markov category of Gaussian kernels. 
Moreover, we will show that Markov categories with conditionals also exist in settings that do not involve numerical probabilities. This implies that our results apply equally easily to all of these cases.

This approach leads to new insights into the structure of $d$-separation and generalizes the classical result, to the best of our knowledge, in four notable directions:
\begin{itemize}[label={--}]
	\item It generalizes the $d$-separation criterion in the pure DAG setting to a larger class of causal models.  First, it gives a criterion for the compatibility of \emph{Markov kernels} with a causal structure rather than merely for probability distributions. This is because Markov categories formalizing probabilities have Markov kernels as their morphisms, which therefore become the basic primitives of our formalism. Second, the local mechanisms in the causal structures are allowed to have multiple outputs and to appear multiple times.
	While all those generalizations can also be included in the DAG setting via suitable workarounds, as considered for example by \cite{Fo21}, string diagrams contain them already intrinsically.
	\item Categorical $d$-separation on string diagrams is more general and conceptually simpler than ``classical'' $d$-separation on DAGs since it only relies on three intuitive operations on string diagrams: \emph{marginalization}, \emph{removing wires}, and \emph{checking disconnectedness}.
	\item It provides a uniform proof of the equivalence between $d$-separation and causal compatibility for discrete variables, continuous variables (or rather arbitrary variables taking values in standard Borel spaces), and Gaussian variables. This follows from the fact that we reason abstractly and even more generally in any Markov category with conditionals. Together with the first point, this approach, therefore, generalizes the $d$-separation criterion on the pure DAG setting for continuous variables shown for distribution with a density \cite{La90,La96}, which was later extended to arbitrary variables with values in standard Borel spaces in \cite{Fo17}.
	\item The abstract reasoning using Markov categories implies that the $d$-separation criterion also applies to non-probabilistic settings, for example to networks modeling deterministic or possibilistic variables and causal mechanisms.
\end{itemize}

This paper is organized as follows. In \Cref{sec:overview}, we give a more detailed non-technical overview of the main concepts of this paper, including causal models as string diagrams and categorical $d$-separation.
In \Cref{sec:MarkovCat}, we recall and explain the definitions of Markov categories and gs-monoidal categories. In \Cref{sec:FreeMarkov}, we review the construction of free gs-monoidal and free Markov categories, leading to generalized causal models and causal compatibility for morphisms in Markov categories. In \Cref{sec:Conditional}, we present the notions of conditionals and conditional independence. In \Cref{sec:decidingComp}, we introduce a categorical notion of $d$-separation and prove the main results of this paper, namely the equivalence with classical $d$-separation (\Cref{pro:dSepCoincide}) and our abstract version of the $d$-separation criterion for causal compatibility (\Cref{thm:causalCompdSep}).

\section{Causal models and Markov categories}
\label{sec:overview}

In this paper, we study causal models, in the sense of Bayesian networks, from a categorical perspective. In order to make this accessible to readers without a formal background in category theory, we outline this paper's main concepts and results in this section.

\Cref{ssec:BayesianNetworksString} gives a non-technical introduction to the string diagrammatic formalism representing causal models and its relation to the classical DAG formalism. We then present the concept of causal compatibility in \Cref{ssec:CausalCompatibility} as a functorial property.
In \Cref{ssec:dsep}, we present categorical $d$-separation as a statement about the connectedness of the string diagram and explain our result on the $d$-separation criterion in \Cref{ssec:dsepCrit}. Therefore, the string diagrammatic approach opens the door for a new perspective on $d$-separation and its connection to causal compatibility.

\subsection{Extending Bayesian networks with string diagrams}
\label{ssec:BayesianNetworksString}

Traditionally, the definition of Bayesian networks relies on the concept of directed acyclic graphs (DAG) since such a graph encodes the underlying causal structure. Each node $v \in V(G)$ of a DAG $G$ is associated with a random variable $X_v$, and each directed arrow $w \to v$ is associated with a direct possible causal dependence of the variable $X_v$ on $X_w$.
Formally, if we index the nodes by $1,\ldots,n$, then this means that a joint probability distribution $P$ is compatible with $G$ if $P$ factorizes into
\begin{equation*}
P\big(X_1, \ldots, X_n\big) = \prod_{i=1}^{n} P\big(X_i \, | \, \textsf{Pa}(X_i)\big)
\end{equation*}
where $\textsf{Pa}(X_i) = \{X_j: G \text{ contains the arrow } j \to i\}$ is the set of parents of $X_i$ relative to $G$.

In the categorical framework, we can refine the notion of Bayesian networks via string diagrams, an idea that extends the categorical approach developed by Fong \cite{Fo13} and others \cite{Ri20,Ri21,Ga22, Ja19}.
These diagrams arise naturally in \emph{categorical probability}, which refers more generally to recent work on axiomatizing probability theory with simple, algebraic rules which avoid the low-level machinery of measure theory by hiding it in the proofs of the relevant axioms \cite{Fr20}.

In categorical probability, the basic primitives are conditional probability distributions, also known as Markov kernels. A Markov kernel is represented as a morphism $f: A \to X$, depicted as
\begin{equation*}
\tikzfig{figures/singleOutputMorphism}
\end{equation*}
Here, $A$ represents the input space and $X$ the output space. The morphism $f$ can be understood as a map that assigns a probability distribution on $X$ to every point $a \in A$ in a measurable way. Depending on the particular context, $f$ can be a stochastic matrix representing a Markov kernel between finite sets, a linear map with added Gaussian noise, or an arbitrary measurable Markov kernel.
For example, a finite stochastic matrix is defined as a map
\begin{align*}
	f \: : \: & X \times A \to [0,1] \\
		& (x,a) \mapsto f(x \, | \, a)
\end{align*}
such that $f(- \, | \, a)$ is a probability distribution on $X$ for every $a \in A$. When choosing $A$ to be a singleton set $I$, a morphism $p: I \to X$ represents a probability distribution, depicted as
\begin{equation*}
\tikzfig{figures/singleOutputProbability}
\end{equation*} 
Here, $I$ is not drawn in the diagram, the upper wire represents the probability space $X$, and the box $p$ the distribution, considered now as a function with no input and one (random) output in $X$. Therefore, probability distributions are considered as special cases of the basic primitives in categorical probability by setting the input space $A$ to be a trivial object $I$. 

These morphisms can be composed into new ones. For example, composing $f:X \to Y$ with $p: I \to X$, pictorially
\begin{equation*}
\tikzfig{figures/composition} 
\end{equation*}
gives rise to a new distribution of a random variable with values in $Y$. In the concrete setting, the composition is given by the Chapman--Kolmogorov equation, which reads for discrete distributions as
\begin{equation*} 
(f \circ p)(y) \coloneqq \sum_{x \in X} f(y\, | \, x) \cdot p(x)
\end{equation*}
and for Markov kernels on arbitrary measurable spaces as
\begin{equation*}
(f \circ p)(A) \coloneqq \int_{X} f(A \, | \, x) \, p(\mathrm{d} x). 
\end{equation*}
where $A \in \Sigma_Y$, with $\Sigma_Y$ the $\sigma$-algebra of the measurable space $Y$.

Each random variable can be copied or marginalized over, for which there exists special string diagrammatic notation:
\begin{equation*}
\tikzfig{figures/copy} \qquad \text{ and } \qquad \tikzfig{figures/del}
\end{equation*} 

These maps satisfy manipulation rules formalizing basic properties of information flow, for example:
\begin{enumerate}[label=(\roman*)]
	\item When marginalizing the output of a box, the action of the box becomes irrelevant and therefore the box can be removed from the diagram, pictorially
\begin{equation}
\label{eq:delBox}
 \tikzfig{figures/fBox} \ = \ \tikzfig{figures/LongDel}
\end{equation}
	This also applies in the case of multiple outputs since the formalism allows viewing multiple wires as a single one (which corresponds to the operation of taking products of measurable spaces).
	\item When marginalizing an output of the copy map, the resulting map acts like an identity, pictorially
\begin{equation}
\label{eq:delCopy}
	\tikzfig{figures/leftCopyContraction} \ = \ \tikzfig{figures/rightCopyContraction} \ = \ \tikzfig{figures/Id}
\end{equation}
\end{enumerate}

We formally axiomatize the semantics of the resulting string diagram calculus in \Cref{def:MarkovCat} as the definition of a \emph{Markov category}.

Each flavour of probability has its own Markov category. There is a Markov category for discrete probabilities (called $\cat{FinStoch}$), one for Gaussian probability ($\cat{Gauss}$), one for probability theory on standard Borel spaces ($\cat{BorelStoch}$), and one for probability theory on arbitrary measurable spaces ($\cat{Stoch}$).

But there are also non-probabilistic Markov categories. These include Markov categories in which every morphism is deterministic (for example $\cat{FinSet}$) and possibilistic Markov categories (for example $\cat{FinSetMulti}$), in which the non-determinism comes merely in the form a distinction between which outcomes are possible and which ones are not: for a morphism $f: X \to Y$, we have $f(y \, | \, x) = 1$ or $f(y \, | \, x) = 0$ according to whether a certain outcome $y \in Y$ is possible on an input $x \in X$ or not.

The causal structures themselves are also morphisms in yet another class of Markov categories, the so-called \emph{free Markov categories}. These form our framework for causal structures which generalize DAGs.
Rather than Markov kernels, the morphisms in a free Markov category are the string diagrams themselves, i.e.\ all ``networks'' that can be built by wiring together a set of boxes, similar to how an electrical circuit is obtained by wiring together electrical components.
In this way, string diagrams constitute generalized causal models.
In particular, we will see that string diagrams can represent arbitrary DAG causal models.
Consider for example this DAG:
\begin{equation}
\label{eq:DAGScenarioDAG}
\begin{tikzpicture}[scale=0.75, shift={(-5,-3)}]
	\draw[fill=white, line width=0.2mm] (0,0) circle (0.8cm) node{\scriptsize $Z$};
	\draw[fill=white, line width=0.2mm] (-3,3) circle (0.8cm) node{\scriptsize $X$};
	\draw[fill=white, line width=0.2mm] (3,3) circle (0.8cm) node{\scriptsize $Y$};
	\draw[fill=white, line width=0.2mm] (0,6) circle (0.8cm) node{\scriptsize $W$};
	
	\draw[-stealth,line width=0.2mm,rotate=45] (0,1) -- (0,3.25);
	\draw[-stealth,line width=0.2mm,rotate=-45] (0,1) -- (0,3.25);
	
	\draw[-stealth,line width=0.2mm,rotate around={45:(3,3)}] (3,4) -- (3,6.25);
	\draw[-stealth,line width=0.2mm,rotate around={-45:(-3,3)}] (-3,4) -- (-3,6.25);	
	
\end{tikzpicture}
\end{equation}
As a string diagram, this causal structure looks like this:
\begin{equation}
\label{eq:DAGScenario}
\tikzfig{figures/DAGscenario1WithoutType}
\end{equation}
By definition, a probability distribution $P(X,Y,Z,W)$ is compatible with this structure if
$$ P(X,Y,Z,W) = P(W \, | \, X,Y) \cdot P(X \, | \, Z) \cdot P(Y \, | \, Z) \cdot P(Z).$$
Each box in the string diagram represents a placeholder for a conditional distribution. For example, the box $f$ stands for the conditional distribution $P(W \, | \, XY)$.
Each loose wire\footnote{Note that we use the term ``wire'' as referring to an entire connected piece of circuitry, i.e.~traversing a black dot in the diagram does not leave the wire.} represents a random variable where the wire's name indicates the variable's name. Further, each variable has a corresponding type, a placeholder for the measurable space in which the variable takes values. Unless necessary, we will not explicitly mention the type of each variable.

In our setting, some wires are connected to an ``output'' representing a variable that is ``observed'' rather than marginalized over, as indicated by the white dots. Note that every variable then becomes an output in at most one way. While the above string diagram connects every wire to an output, the following string diagram also contains ``unobserved'' variables:
\begin{equation}
\label{eq:DAGScenarioXYZHidden}
\tikzfig{figures/DAGScenarioXYZHidden}
\end{equation}
In the setting of discrete probability, a distribution $P(X,Y)$ is therefore compatible with this structure, if 
$$P(X, Y) = \sum_Z P(X\, | \, Z) \cdot P(Y\, | \, Z) \cdot P(Z), $$
where the marginalized variable $Z$ is represented by the wire not connected to the output.

In certain situations, we only consider causal models where every wire is connected to an output. Throughout the paper, we call such diagrams \emph{pure blooms} (see \Cref{def:purebloom}). Unless stated otherwise, we will denote the output by the name of the wire connected to the output. We will define certain operations, like $d$-separations, only with respect to output wires, highlighting that we cannot address latent variables (i.e.\ wires that are not connected to an output) in general.

Causal models given by a DAG correspond to pure bloom string diagrams where each box has precisely one output wire, and the input wires represent the parents of the node in the DAG. The following table explains the relation between nodes in a DAG, boxes in the string diagram and the corresponding conditional probability distribution.

\medskip
\begin{center}
\renewcommand{\arraystretch}{1.1}
\setlength{\tabcolsep}{0pt}
\begin{tabu}{p{0.2cm} p{0.5cm} |[1pt] >{\centering\arraybackslash}p{3.8cm} | >{\centering\arraybackslash}p{3.8cm} | >{\centering\arraybackslash}p{3.8cm} p{0.2cm} }
& \multirow{5}{*}{}\\[-0.5cm]
& \multicolumn{1}{p{0.3cm} |[1pt]}{} & \scriptsize DAG & \scriptsize string diagram & \scriptsize conditional distribution &\\
\tabucline[1pt]{1-6}
& & & & &\\[-0.3cm]
& \begin{tikzpicture}
\node[rotate=90] at (0,0) {\scriptsize one output};
\end{tikzpicture} & \begin{tikzpicture}[scale=0.9]
\draw[-stealth,line width=0.2mm] (0,1) -- (0,2);
\draw[-stealth, line width=0.2mm] (0,-2) -- (0,-1);
\draw[-stealth, line width=0.2mm, rotate=30] (0,-2) -- (0,-1);
\draw[-stealth, line width=0.2mm, rotate=-30] (0,-2) -- (0,-1);
\draw[fill=white, line width=0.2mm] (0,0) circle (0.8cm) node{\scriptsize $X$};
\end{tikzpicture}
&
\tikzfig{figures/morphismDAG1} & $P(X\, | \, ABC)$ &\\
& & & & &\\[-0.3cm]
\tabucline{2-5}
& & & & &\\[-0.3cm]
& \begin{tikzpicture}
\node[rotate=90] at (0,0) {\scriptsize ident. outputs}; 
\end{tikzpicture}
& \begin{tikzpicture}[scale=0.9]
\draw[-stealth,line width=0.2mm, rotate=15] (0,1) -- (0,2);
\draw[-stealth,line width=0.2mm, rotate=-15] (0,1) -- (0,2);
\draw[-stealth, line width=0.2mm] (0,-2) -- (0,-1);
\draw[-stealth, line width=0.2mm, rotate=30] (0,-2) -- (0,-1);
\draw[-stealth, line width=0.2mm, rotate=-30] (0,-2) -- (0,-1);
\draw[fill=white, line width=0.2mm] (0,0) circle (0.8cm) node{\scriptsize $X$};
\end{tikzpicture}
& \tikzfig{figures/morphismDAG2} & $P(XX \, | \, ABC)$ &\\
& & & & &\\[-0.3cm]
\tabucline{2-5}
& & & &\\[-0.3cm]
& \begin{tikzpicture}
\node[rotate=90] at (0,0) {\scriptsize diff. outputs}; 
\end{tikzpicture} & \vspace*{-0.35cm}\huge{\xmark} & \tikzfig{figures/morphismDAG3} & $P(XY \, | \, ABC)$\\[-0.3cm]
& & & & &\\[-0.5cm]
& \multirow{5}{*}{}\\
\end{tabu}
\end{center}
\medskip
Using string diagrams themselves as generalized causal models allows us to go beyond the DAG approach in several directions:
\begin{itemize}[label={--}]
	\setlength\itemsep{0.5em}
	\item String diagrams in Markov categories describe Markov kernels instead of just probability distributions. Therefore, the string diagram language allows for modeling causal structures with inputs, such as
	\begin{equation*} 
	\tikzfig{figures/tripartiteScenarioOpen}
	\end{equation*} 
	This describes a causal structure in which the input variable at the bottom does not have any particular distribution itself. For example, a conditional probability distribution is compatible with the above string diagram if
	$$P(X,A,Y \, | \, A) = P(X \, | \, A) \cdot P(Y \, | \, A)$$
	\item As indicated in the table, boxes in the string diagram can have more than a single output wire. This allows for causal structures like
	\begin{equation*}
	\tikzfig{figures/DAGImpossibleScenario}
	\end{equation*} 
	which are not native to the DAG framework (see \Cref{ex:dSepExamples}\ref{ex:dSepExamplesI} for a detailed discussion of this structure).
	In this situation a probability distribution $P(X, Y, Z_1, Z_2)$ is compatible with the given structure if
	$$P(X,Y,Z_1, Z_2) = P(X \, | \, Z_1) \cdot P(Y \, | \, Z_2) \cdot P(Z_1, Z_2).$$
	\item String diagrams allow for the use of identical boxes multiple times. This allows to represent for example the transition probabilities of a \emph{Markov decision processes} (see \cite[Section 2.1]{Pu94}) as the causal structure
	\begin{equation}\label{eq:MDP}
	\tikzfig{figures/MDP}
	\end{equation} 
	Here, $S_i$ denotes the state variable and $A_i$ the action variables at time $i$ and the boxes are all identical since such a decision process is given by a fixed conditional probability distribution
	$$P(S_{i+1} \, | \, S_{i}, A_{i}),$$
	which highlights that a Markov decision process has no additional internal memory besides the previous state.
Moreover, in this situation the types of the variables $A_1, \ldots, A_n$ must be the same, namely a particular action space $A$, and similarly the $S_1, \ldots, S_n$ all represent the same state space $S$. Therefore, the box $f$ can be seen as a morphism $S \otimes A \to S$.
\end{itemize}

Throughout the paper, we will usually follow the convention of labelling wires by their types together with an index that lets us address them individually, and at the same time identify these wires with the variables they carry. We do not index the boxes as there is no need to reference the distinct instances of the same type of box separately.

\subsection{Causal compatibility for Markov categories}
\label{ssec:CausalCompatibility}
A distribution is compatible with a causal model if it can be written as a composition in precisely the way specified by the corresponding string diagram. In other words, every type $W$ in the string diagram must be mapped to a concrete measurable space $FW$ and every box $f$ to a concrete Markov kernel $Ff$.

In category-theoretic language, this is captured in the following way \cite{Fo13}. A morphism $p$ in a concrete Markov category is compatible with a causal structure $\varphi$ if and only if there exists a Markov functor $F$ such that $p = F\varphi$. Intuitively, this functor acts in the following way:

\medskip
\begin{center} \begin{tikzpicture}

	\begin{pgfonlayer}{nodelayer}
	
	\draw[thick, gray!30!white] (-11,-4.5) rectangle (-3,7.5);
	\draw[thick, gray!30!white] (15.5,-4.5) rectangle (3.5,7.5);
	\node[style=none,gray!70!white] at (-7,6.8) {$\cat{FreeMarkov}$};
	\node[style=none,gray!70!white] at (9.5,6.8) {$\cat{Stoch}$, $\cat{BorelStoch}$,};
	\node[style=none,gray!70!white] at (9.5,5.6) {$\cat{Gauss}$, etc.};	
	
	\draw[thick, white, line width=0.15cm] (-6.25, -3.5) to[in=-160, out=-20] (6.25, -3.5);
	\draw[thick, blue, opacity=0.3, -stealth] (-6.25, -3.5) to[in=-160, out=-20] (6.25, -3.5);
	
	\draw[thick, white, line width=0.15cm] (-8.25, 3.5) to[in=160, out=20] (4.25, 3.5);
	\draw[thick, blue, opacity=0.3, -stealth] (-8.25, 3.5) to[in=160, out=20] (4.25, 3.5);
	
	\draw[thick, white, line width=0.15cm] (-4.25, 3.5) to[in=160, out=20] (8.25, 3.5);
	\draw[thick, blue, opacity=0.3, -stealth] (-4.25, 3.5) to[in=160, out=20] (8.25, 3.5);
	
	\draw[thick, white, line width=0.15cm] (-4.25, -0.2) to[in=-160, out=-20] (7.75,-0.2);
	\draw[thick, orange, opacity=0.3, -stealth] (-4.25, -0.2) to[in=-160, out=-20] (7.75,-0.2);
	
	\draw[thick, white, line width=0.15cm] (-8.25, -0.2) to[in=-160, out=-20] (3.75,-0.2);
	\draw[thick, orange, opacity=0.3, -stealth] (-8.25, -0.2) to[in=-160, out=-20] (3.75,-0.2);

		\node [style=boxSmall] (0) at (-9, 0.5) {$f$};
		\node [style=boxSmall] (1) at (-5, 0.5) {$g$};
		\node [style=blackDot] (2) at (-7, -1.25) {};
		\node [style=none] (3) at (-7, -3) {};
		\node [style=none] (4) at (-5, 3) {};
		\node [style=none] (5) at (-9, 3) {};
		\node [style=none] (6) at (-5, 3.7) {$Y$};
		\node [style=none] (7) at (-9, 3.7) {$X$};
		\node [style=none] (8) at (-7, -3.7) {$Z$};
		\node [style=whiteDot] at (-7, 3) {};
		\node [style=whiteDot] at (-7, -3) {};
		\node [style=whiteDot] at (-5, 3) {};
		\node [style=whiteDot] at (-9, 3) {};		
		
		\node [style=none] (9) at (-7, 3) {};
		\node [style=boxSmall] (11) at (5, 0.5) {$Ff$};
		\node [style=boxSmall] (12) at (9, 0.5) {$Fg$};
		\node [style=blackDot] (13) at (7, -1.25) {};
		\node [style=none] (14) at (7, -2.25) {};
		\node [style=none] (15) at (9, 2.25) {};
		\node [style=none] (16) at (5, 2.25) {};
		\node [style=none] (17) at (9, 2.75) {$FB$};
		\node [style=none] (18) at (5, 2.75) {$FA$};
		\node [style=none] (19) at (7, -2.75) {$FC$};
		\node [style=none] (20) at (7, 2.25) {};
		\node [style=none] (21) at (7, 2.75) {$FC$};
		\node [style = none] at (10.7,0.8) {$\stackrel{!}{=}$};
		\node [style=boxBroad] (22) at (13, 0.5) {$p$};
		\node [style=none] (23) at (13, -1.75) {};
		\node [style=none] (24) at (14.5, 2.25) {};
		\node [style=none] (25) at (13, 2.25) {};
		\node [style=none] (26) at (11.5, 2.25) {};
		\node [style=none] (27) at (13.5, 0.75) {};
		\node [style=none] (28) at (12.5, 0.75) {};
		\node [style=none] (29) at (13, -2.75) {$Z'$};
		\node [style=none] (30) at (11.5, 2.75) {$X'$};
		\node [style=none] (31) at (13, 2.75) {$Z'$};
		\node [style=none] (32) at (14.5, 2.75) {$Y'$};
	\end{pgfonlayer}
	\begin{pgfonlayer}{edgelayer}
		\draw [style=edge] (2) to (3.center);
		\draw [style=edge, in=-90, out=0] (2) to (1);
		\draw [style=edge, in=-90, out=180] (2) to (0);
		\draw [style=edge] (1) to (4.center);
		\draw [style=edge] (0) to (5.center);
		\draw [style=edge] (2) to (9.center);
		\draw [style=edge] (13) to (14.center);
		\draw [style=edge, in=-90, out=0] (13) to (12);
		\draw [style=edge, in=-90, out=180] (13) to (11);
		\draw [style=edge] (12) to (15.center);
		\draw [style=edge] (11) to (16.center);
		\draw [style=edge] (13) to (20.center);
		\draw [style=edge] (23.center) to (22);
		\draw [style=edge] (22) to (25.center);
		\draw [style=edge, in=-90, out=90, looseness=1.25] (27.center) to (24.center);
		\draw [style=edge, in=-90, out=90, looseness=1.25] (28.center) to (26.center);
	\end{pgfonlayer}

	\draw[line width=0.05cm, color=brown!70!black, -stealth] (-2.5, 0.5) -- node[midway, above]{$F$} (3, 0.5); 
\end{tikzpicture} \end{center}
\medskip
where $p$ is the given morphism in a concrete Markov category and $\cat{FreeMarkov}$ the free Markov category whose morphisms capture the causal models. Further, $X$ has type $A$, $Y$ has type $B$, and $Z$ has type $C$.
So if the original $p$ takes input from a measurable space $Z'$ and outputs values in measurable spaces $X'$, $Z'$ and $Y'$, then the types must match in the sense that $FA = X'$ etc.

\subsection{Categorical \texorpdfstring{$d$}{d}-separation}
\label{ssec:dsep}

The notion of $d$-separation for DAGs is a criterion relating conditional independence in causal models to the causal compatibility with a DAG. In \Cref{ssec:dseparation}, we introduce a categorical notion of $d$-separation from a different perspective. Although this notion looks different and \emph{conceptually} much simpler than classical $d$-separation, we prove that it coincides with the classical one when considering causal models on DAGs. 

An output wire $Z$ \emph{categorically $d$-separates} the output $X$ from output $Y$ if $X$ and $Y$ become disconnected upon \emph{marginalizing} over all wires not in $X,Y$ and $Z$ and then \emph{removing} the wire $Z$. We also express this by saying that $X$ is categorically $d$-separated from $Y$ by $Z$.

Consider, for example, the DAG in Equation \eqref{eq:DAGScenarioDAG}. $Z$ classically $d$-separates $X$ from $Y$, as one can see based on the fact that the only paths between $X$ and $Y$ are the collider $X \rightarrow W \leftarrow Y$ and the fork $X \leftarrow Z \rightarrow Y$. In the corresponding string diagram, Equation \eqref{eq:DAGScenario}, we witness categorical $d$-separation by first marginalizing over $W$, then removing the $Z$ wire, and finally observing that $X$ and $Y$ are disconnected, pictorially:
\begin{equation*} \tikzfig{figures/DAGscenario1MARG} \ = \tikzfig{figures/DAGscenario1MARG2} \ = \quad \tikzfig{figures/DAGscenario1XYZ} \quad \xrightarrow{\ \Cut_Z \ } \quad \tikzfig{figures/DAGscenario1CUT} \end{equation*} 
Here we have used the rules of marginalization, namely Equation \eqref{eq:delBox} and Equation \eqref{eq:delCopy}.

On the other hand, $X$ is not $d$-separated from $Y$ by $W$ and $Z$ due to the collider $X \rightarrow W \leftarrow Y$. In the string diagram, this is apparent since upon removing the wires $Z$ and $W$,
\begin{equation*} \tikzfig{figures/DAGscenario1WithoutType} \qquad \xrightarrow{\qquad \Cut_{W,Z} \qquad} \qquad \tikzfig{figures/DAGscenario1CUT2} \end{equation*} 
$X$ and $Y$ are still connected. 

We note that there also are approaches to classical $d$-separation based on topological disconnectedness, such as using the moralized graph of a DAG \cite[Proposition 3.25]{La96}. An application of this criterion consists of four steps: first, \emph{marginalizing} a certain subset of variables not present in the $d$-separation; second, \emph{moralizing} the DAG to an undirected graph; third, removing every vertex associated to $Z$; and finally, checking \emph{disconnectedness}.
Categorical $d$-separation is conceptually simpler than the DAG approach since it consists only of three steps, namely marginalizing \emph{all} variables not involved in the putative $d$-separation, removing the $Z$ wires, and then checking disconnectedness. The moralization step is omitted since its function is already encoded in the structure of string diagrams.
 Therefore, categorical $d$-separation is a more intuitive procedure compared to the standard definitions of $d$-separation on DAGs.

\subsection{$d$-separation and causal compatibility}
\label{ssec:dsepCrit}

For Bayesian networks, $d$-separation detects conditional independences for any compatible probability distribution. In particular, if two output wires $X$ and $Y$ are $d$-separated by the output wire $Z$, we have that $X$ is independent of $Y$ given $Z$ (denoted as $X \perp Y \ | \ Z$).

Conditional independence in Markov categories is defined string diagrammatically. A probability distribution $p$ on a product space $X \times Y \times Z$ displays the conditional independence $X \perp Y \ | \ Z$ if it can be written in the form
\begin{equation*}
\tikzfig{figures/tripartiteProbability} \quad = \quad \tikzfig{figures/conditionalIndepTripartiteProbability}
\end{equation*} 
This reflects the classical notion of conditional independence with the two unnamed boxes corresponding to the conditionals\footnote{See also \cite[Remark 12.4]{Fr20} for discussion.}; in the context of discrete probability, it encodes the equation
\begin{equation*} P(X=x,Y=y,Z=z) \,=\, P(X = x \, | \, Z = z) \cdot P(Y = y \, | \, Z = z) \cdot P(Z = z).
\end{equation*} 
But also for Gaussian random variables as well as measures on standard Borel spaces, one recovers the intuitive notions of conditional independence \cite[Section 12]{Fr20}. Moreover, the diagrammatic definition is even sufficient to derive the semigraphoid properties \cite[Lemma 12.5]{Fr20}.
It also generalizes to a notion of conditional independence for morphisms with inputs (see \Cref{def:condInd_diff}).

In \Cref{ssec:causalCompatibility}, we prove that the categorical $d$-separation criterion applies to generalized causal models in Markov categories. For this reason, we define a notion of conditional independence which applies to arbitrary Markov kernels (\Cref{def:condInd_diff}). We first prove the soundness of the $d$-separation, namely, if $Z$ categorically $d$-separates $X$ from $Y$, then $X \perp Y \ | \ Z$ (\Cref{cor:dseparationCondIndependence}). Second, we prove the completeness of $d$-separation for causal compatibility: if a Markov kernel satisfies the global Markov property for a causal structure (i.e.\ every $d$-separated triple shows conditional independence), then the Markov kernel is compatible with the structure (see \Cref{thm:causalCompdSep} for the precise assumptions and also the equivalence with the local Markov property).

A central assumption for the proof is the existence of conditionals (see \Cref{def:existenceConditionals}). Intuitively it says that the outputs of any morphism $f$ can be generated successively while having access to all prior information. Conditionals exist in discrete probability, measure-theoretic probability on standard Borel spaces and in Gaussian probability, and this facilitates the application of our results to all of these cases, where the second case includes continuous random variables.

\section{gs-monoidal and Markov categories}
\label{sec:MarkovCat}

In the following, we recall the definitions of Markov categories and gs-monoidal categories. Markov categories are the basic structure modeling different flavors of probability, including discrete random variables, Gaussian random variables, continuous random variables on standard Borel spaces, or random variables on arbitrary measurable spaces.
We assume some familiarity with symmetric monoidal categories and string diagrams. We refer to \cite{Le16, Pe19} for a general introduction to category theory and to \cite{Ba08, Co09,He19} for a more detailed account of monoidal categories and their string diagrammatic calculus.

Intuitively, a monoidal category is a category with a product structure $\otimes$. Specifically, two objects $A,B$ in the category have an associated product object $A \otimes B$. Moreover, if $f: A \to X$ and $g: B \to Y$ are morphisms, then $\otimes$ gives rise to a new morphism $f \otimes g: A \otimes B \to X \otimes Y$. This monoidal product can have different realizations. For example, the category $\cat{Set}$ is monoidal with the Cartesian product of sets and functions. A monoidal category is \emph{symmetric} if the objects $A \otimes B$ and $B \otimes A$ are naturally isomorphic in a nice way. All of the monoidal categories in this paper are symmetric.

The notion of gs-monoidal categories goes back to Gaducci's thesis \cite[Definition 3.9]{Ga96} and and an associated paper by Corradini and Gadducci \cite{Co99}. There, it was considered with a different motivation in the context of term graphs and term graph rewriting.

\begin{definition}[{gs-monoidal category and Markov category}]
\phantom{a}
\label{def:MarkovCat}
\begin{enumerate}[label=(\roman*)] 
	\item A \emph{gs-monoidal category} $\cat{C}$ is a symmetric monoidal category with monoidal unit $I$ equipped with a commutative comonoid structure for every object $X \in \cat{C}$ given by a counit $\Del{X} \ : \ X \to I$ and a comultiplication $\Copy{X} \ : \ X \to X \otimes X$. In the string diagrammatic notation, these operations are depicted as
\[ \Del{X} \ \coloneqq \  \tikzfig{figures/del} \hspace{3cm} \Copy{X} \ \coloneqq \ \tikzfig{figures/copy}\]
They are required to satisfy the commutative comonoid equations, diagrammatically given by
\begin{equation}
	\label{eq:commcom}
	\begin{gathered}
		\tikzfig{figures/comonoid1} \ = \ \tikzfig{figures/comonoid2} \hspace{1.4cm} \tikzfig{figures/leftCopyContraction} \ = \ \tikzfig{figures/rightCopyContraction} \ = \ \tikzfig{figures/Id} \hspace{1.4cm} \tikzfig{figures/copy_long} \ = \ \tikzfig{figures/copy_swap}
	\end{gathered}
\end{equation} 
and to be compatible with the monoidal structure, i.e.\
\begin{equation}
	\label{eq:comonoidalStruc}
		\tikzfig{figures/delTensor1} \ = \ \ \ \tikzfig{figures/delTensor2} \hspace{1.5cm} \tikzfig{figures/copy_tensor1} \ \ \ = \  \tikzfig{figures/copy_tensor2}
\end{equation} 
as well as
\begin{equation} 
\label{eq:copyDelI} \tikzfig{figures/emptyDel} \quad = \quad \tikzfig{EmptyBox} \hspace{2cm} \tikzfig{figures/emptyCopy} \quad = \quad \tikzfig{EmptyBox} \hspace{0.4cm}
\end{equation} 
	\item A gs-monoidal category is called a \emph{Markov category} if $\Del{}$ is in addition natural, i.e.\ if for all morphisms $f$,
\begin{equation} 
\label{eq:naturality} \tikzfig{figures/fBox} \quad = \quad \tikzfig{figures/longdel} 
\end{equation} 
\end{enumerate}
\end{definition}

We refer to~\cite[Remark~2.2]{Fr22} for more details on the multifarious history of these notions.
When considering morphisms with multiple inputs or outputs, we often denote the collective inputs and outputs as (multi)sets instead of tensor products. For example, we write
\begin{equation*}
 \tikzfig{figures/MorphismSet1} \ = \quad \tikzfig{figures/MorphismSet2}
\end{equation*} 
where $\mathcal{V} = \{A,B,C\}$ and $\mathcal{W} = \{X, Y\}$.
Modulo some abuse of notation, the order of the inputs and outputs is irrelevant since we can always permute the wires, and therefore it is enough to consider $\mathcal{V}$ and $\mathcal{W}$ as multisets rather than lists.

\begin{example}
The most important examples of Markov categories are the following:
\begin{enumerate}[label=(\alph*)]
	\item $\cat{FinStoch}$ is the Markov category of finite stochastic maps. The objects are finite sets $X$, morphisms $p: I \to X$ are probability distributions and general morphisms $f: X \to Y$ are stochastic matrices with entries $f(y\, | \, x)$. The symmetric monoidal structure is given by the Cartesian product of sets.
	\item $\cat{Stoch}$ is the Markov category of arbitrary Markov kernels on measurable spaces. The objects are measurable spaces, morphisms $p: I \to X$ are probability measures, and general morphisms $f : X \to Y$ are measurable Markov kernels.
	\item $\cat{BorelStoch}$ is given by $\cat{Stoch}$ restricted to standard Borel spaces as objects.
	\item $\cat{Gauss}$ is the Markov category of Gaussian probability distributions. The objects are the spaces $\mathbb{R}^n$, morphisms $p: I \to \mathbb{R}^n$ are Gaussian probability measures and general morphisms $f: \mathbb{R}^n \to \mathbb{R}^m$ can be understood as stochastic maps of the form
	\begin{equation*}
	 x \mapsto A x + \xi 
	\end{equation*} 
	where $A$ is any real $m \times n$ matrix and $\xi$ is an independent Gaussian variable with given mean and variance.
	$\cat{Gauss}$ is a (non-full) subcategory of $\cat{BorelStoch}$.
	\item $\cat{FinSet}$ is the Markov category of finite sets and (deterministic) functions. Therefore, a morphism $f: X \to Y$ is a function from $X$ to $Y$ and $p: I \to X$ is a single element of $X$. The copy maps are given by
	\begin{align*}
		\Copy{X} : {} 	& X \to X \times X \\
				& x \mapsto (x,x)
	\end{align*}
	and the deletion morphism $\Del{X}: X \to I$ is the only map to the one-element set $I$.
	
	Equivalently, a morphism $f: X \to Y$ can be thought of a matrix $(f(y \, | \, x))_{y \in Y, x \in X}$ with entries in $\{0,1\}$ by defining $f(y\, | \, x) = 1$ if and only if $f(x) = y$.
	This shows that there is a functor $\cat{FinSet} \to \cat{FinStoch}$ through which the Markov category structure of $\cat{FinSet}$ is inherited from $\cat{FinStoch}$.
	$\cat{FinSet}$ also generalizes to the category $\cat{Set}$ by considering arbitrary sets.
	\item $\cat{FinSetMulti}$ is the Markov category of finite sets and multivalued maps. A morphism $f:X \to Y$ is given by a matrix $(f(y \, | \, x))_{y \in Y, x \in X}$ with entries in $\{0,1\}$ and subject to the condition that for every $x$, we have $f(y\, | \, x) = 1$ for some $y$
	If $f(y \, | \, x) = 1$, then $y$ is possible when given $x$, while $f(y \, | \, x) = 0$ implies that $y$ is impossible given $x$. We define composition of two morphisms $f: X \to Y$ and $g:Y \to Z$ via
	$$(g \circ f)(z\, | \, x) \coloneqq \sum_{y \in Y} g(z \, | \, y) \cdot f(y \, | \, x)$$
	where we use the convention that $1+1 = 1$. This suggests the same notation as for morphisms as in $\cat{FinStoch}$.
\end{enumerate}
\end{example}

For further details on the composition, the symmetric monoidal structure and the Markov category structure of these examples, we refer to \cite{Fr20}.

In our context, gs-monoidal categories that are not Markov categories play more of an auxiliary role which we will detail below.

\section{Free Markov categories and generalized causal models}
\label{sec:FreeMarkov}

Causal models are a framework for studying and modeling dependencies between random variables. 
In this section, we introduce such a framework in the language of Markov categories. We therefore investigate causal relationships independently of the particular notion of probability behind it (discrete, measure-theoretic, Gaussian, etc).

Free Markov categories are the tailored notion for these purposes. These categories contain precisely all blueprints for causal networks that can be built from a bunch of given causal mechanisms. A morphism in this category is then a \emph{generalized causal model}. 

The remainder of this section explains this in technical detail based on the formalism of free gs-monoidal categories and free Markov categories from \cite{Fr22}.\footnote{Another very similar construction of free gs-monoidal categories has been given independently in \cite{Mi22}.} This part is structured as follows: In \Cref{sec:hyp}, we introduce the category of hypergraphs. In \Cref{sec:stringDiagrams} we use hypergraphs to define gs-monoidal string diagrams, free gs-monoidal categories, and subsequently free Markov categories. In \Cref{sec:compatibility} we introduce generalized causal models and causal compatibility of morphisms in arbitrary Markov categories.

\subsection{The category of hypergraphs}
\label{sec:hyp}

A gs-monoidal string diagram, and therefore also a generalized causal model is defined as a hypergraph with extra structure.
To define the relevant notion of hypergraph following~\cite{Bo16}, let first $\cat{I}$ be the category defined in the following way:
\begin{enumerate}[label=(\roman*)]
	\item The set of objects is given by $\{(k, \ell) \mid k, \ell \in \mathbb{N}\} \cup \{\star\}$.
	\item Besides the identity morphisms, for every $(k,\ell)$ there are $k + \ell$ different morphisms
		$$\mathsf{in}_1, \ldots, \mathsf{in}_k, \mathsf{out}_1, \ldots, \mathsf{out}_{\ell}: (k,\ell) \to \star,$$
		and no other morphisms.
\end{enumerate}

It is not necessary to specify a composition rule in $\cat{I}$ since no compositions except the trivial ones can be formed.

\begin{definition}
A functor $G: \cat{I} \to \cat{Set}$ is called a \emph{hypergraph}. Accordingly, we define the functor category
\begin{equation*}
\cat{Hyp} \coloneqq \cat{Set}^{\cat{I}} 
\end{equation*} 
to be the category of hypergraphs.
\end{definition}

Intuitively, the functor $G$ characterizes our common interpretation of (directed) hypergraphs in the following way:
\begin{enumerate}[label=(\roman*)]
	\item $W(G) \coloneqq G(\star)$ is the set of vertices, which we will call \emph{wires}.
	\item $B_{k,\ell}(G) \coloneqq G((k,\ell))$ is the set of hyperedges, which we will call \emph{boxes}, with $k$ inputs and $\ell$ outputs.
	\item $G(\mathsf{in}_i)$ specifies the $i$th input wire of every box.
	\item $G(\mathsf{out}_j)$ specifies the $j$th output wire of every box.
\end{enumerate}

While the set of wires and boxes of a hypergraph may be infinite, the number of inputs and outputs of a box is always finite. We present a pictorial representation of a hypergraph in \Cref{fig:hypergraph}.

\begin{figure}[h!]
\begin{center}
\tikzfig{figures/hypergraph_example}
\end{center}
\caption{Pictorial representation of a hypergraph with wire set $\{A,B,C,D,E\}$ and box set $\{f,g,h,m,n\}$. For example, the box $f$ has one input incident to the wire $B$ and two outputs both incident to the wire $A$. The wire $E$ is not incident to any box.}
\label{fig:hypergraph}
\end{figure}

Given a box $b \in B_{k,\ell}(G)$ and a wire $A \in W(G)$, we define
\begin{align*}
	\mathsf{in}(b,A) &\coloneqq \big|\big\{j \in \{1, \ldots, \ell\}: \mathsf{in}_{j}(b) = A\big\}\big|, \\
	\mathsf{out}(b,A) &\coloneqq \big|\big\{i \in \{1, \ldots, k\}: \mathsf{out}_{i}(b) = A\big\}\big|.
\end{align*} 
Thus $\mathsf{in}(b,A)$ and $\mathsf{out}(b,A)$ counts the number of incoming or outgoing wires of type $A$ in the box $b$. We also define the sets of inputs and outputs as
\begin{align*}
	\In(b) &\coloneqq \big\{\In_i(b): i \in \{1, \ldots, \ell\}\big\}, \\
	\Out(b) &\coloneqq \big\{\Out_i(b): i \in \{1, \ldots, k\}\big\},
\end{align*} 
where repeated wires are counted only once.

Next, we analyze the morphisms in $\cat{Hyp}$. Since $\cat{Hyp}$ is a functor category, a morphism $\alpha: F \to G$ is precisely a natural transformation $\alpha: F \Rightarrow G$. Such a natural transformation is fully determined by its components
\begin{equation*} 
\alpha_{\star}: W(F) \to W(G) \quad \text{ and } \quad \alpha_{(k,l)}: B_{(k,\ell)}(F) \to B_{(k,\ell)}(G) \quad \text{ for all } k, \ell \in \mathbb{N} 
\end{equation*} 
satisfying naturality, i.e.\ commutativity of all diagrams of the form
\begin{equation*}
\begin{tikzcd}
B_{k,\ell}(F) \ar{d}{\alpha_{k,\ell}} \ar{r}{\mathsf{in}_i} & W(F) \ar{d}{\alpha_{k,\ell}} \\
B_{k,\ell}(G) \ar{r}{\mathsf{in}_i} & W(G)
\end{tikzcd} \qquad \qquad \qquad \begin{tikzcd}
B_{k,\ell}(F) \ar{d}{\alpha_{k,\ell}} \ar{r}{\mathsf{out}_j} & W(F) \ar{d}{\alpha_{k,\ell}} \\
B_{k,\ell}(G) \ar{r}{\mathsf{out}_j} & W(G).
\end{tikzcd}
\end{equation*} 
In other words, every natural transformation is a structure-preserving map between hypergraphs, i.e.~if box $f$ is incident to wire $A$ in its $i$th input in the hypergraph $F$, then the same applies to their images with respect to $\alpha$ in the hypergraph $G$.

A hypergraph can contain an infinite set of wires and boxes. In the following we mainly restrict to \emph{finite hypergraphs}, i.e.\ functors $G : \cat{I} \to \cat{Set}$ where $W(G)$ and
$$ B(G) \coloneqq \coprod_{k,\ell \in \mathbb{N}} B_{k,\ell}(G).$$
are finite sets.
We denote the corresponding full subcategory of $\cat{Hyp}$ by $\cat{FinHyp}$.

\subsection{gs-monoidal string diagrams and free Markov categories}
\label{sec:stringDiagrams}

The pictorial representation of hypergraphs indicates their use for modeling causal structures in a categorical framework. In this subsection, we construct the free Markov category associated to a fixed hypergraph $\Sigma$. This is a Markov category in which the morphisms are string diagrams formed out of the boxes in $\Sigma$, and we argue that these morphisms can be used as generalized causal models.

However, three apparent problems make hypergraphs not directly applicable to represent string diagrams:
\begin{enumerate}
	\item General hypergraphs might contain loops, for example in the sense that an output wire of a box may directly feed back as an input.
	\item While the splitting of a wire into two represents the copying of values and makes sense in any Markov category, the merging of wires as in \Cref{fig:hypergraph} does not make sense.
	\item A hypergraph in itself does not include any information about which wires are inputs or outputs of the overall diagram.
\end{enumerate}	
We resolve these issues by restricting to acyclic and left monogamous hypergraphs and by representing gs-monoidal string diagrams by cospans thereof:

\begin{definition}
\label{def:stringDiagram}
Let $\Sigma$ be a hypergraph. A \emph{gs-monoidal string diagram} for $\Sigma$ is a cospan in the slice category $\cat{FinHyp}/\Sigma$ of the form
\begin{equation*}
\begin{tikzcd} 
& G & \\
\underline{n} \arrow{ru}{p}  & & \underline{m} \arrow[swap]{lu}{q}
\end{tikzcd}
\end{equation*} 
satisfying that:
\begin{enumerate}[label=(\roman*)]
	\item $G$ is \emph{acyclic}, i.e.\
	there is \emph{no} family of wires $A_0, \ldots, A_{k-1} \in W(G)$ and boxes $f_0, \ldots, f_{k-1} \in B(G)$ such that
	\begin{equation*}
		\mathsf{in}(f_i, A_i) \geq 1 \quad \text{ and } \quad \mathsf{out}(f_i, A_{i+1}) \geq 1,
	 \end{equation*} 
	where the subscripts are modulo $k$.
	\item $G$ is \emph{left monogamous}, i.e.\ for every wire $W \in W(G)$ we have
		\begin{equation*}
		 |p^{-1}(W)| + \sum_{f \in B(G)} \mathsf{out}(f,W) = 1.
		 \end{equation*}  
\end{enumerate}
By abuse of notation, we also write $G$ for the underlying hypergraph of the object $G$ in $\cat{FinHyp} / \Sigma$, and we write $\mathsf{type} : G \to \Sigma$ for the morphism that makes it into an object of $\cat{FinHyp} / \Sigma$.
\end{definition}

In this definition, the \emph{discrete hypergraph} $\underline{n}$ is defined to have $W(\underline{n}) = \{1,\ldots,n\}$ and contains no boxes. Thus the morphism $p : \underline{n} \to G$ simply equips some wires with labels from $1,\ldots,n$, thereby telling us which wires of $G$ are input wires of the overall diagram and in which order. The other cospan leg $q : \underline{m} \to G$ similarly encodes the $m$ output wires.

\begin{example}
\label{ex:ConstructionGsMonoidal}
	Consider the two hypergraphs
	$$ \Sigma \ = \ \tikzfig{figures/Sigma1}$$
	and
	$$ G \ = \ \tikzfig{figures/G1}$$
	Note that we represent $\Sigma$ by two distinct boxes instead of connecting them via the $Y$ wire in order to highlight the purpose of $\Sigma$ being the collection of elementary building blocks.
	The hypergraph $G$ is an object of the slice category $\cat{FinHyp}/\Sigma$ through the hypergraph morphism $\mathsf{type}: G \to \Sigma$ that maps the wires $X_1$ to $X$, $Y_1$ to $Y$, and $W_1, W_2, W_3$ to $W$, and that maps the box $f_1$ to $f$, and the boxes $g_1, g_2,g_3$ to $g$. 
	
	We define a gs-monoidal string diagram as a cospan by specifying two discrete hypergraphs $\underline{n}$ and $\underline{m}$ together with mappings $p: \underline{n} \to W(G)$, $q: \underline{m} \to W(G)$ that specify which wires are connected to the input and to the output output. Let us consider two different choices:
	\begin{enumerate}[label=(\roman*)]
		\item If $\underline{n} = \{1,2\}$ and $\underline{m} = \{1,2,3,4,5\}$ as well as
			\begin{align*}
				p \: & : \: 1 \mapsto X_1, \: 2 \mapsto X_1, \\[2pt]
				q \: & : \: 1 \mapsto W_1, \: 2 \mapsto W_2, \: 3 \mapsto Y_1, \: 4 \mapsto Y_1, \: 5 \mapsto W_3,
			\end{align*}
			then we get a cospan that corresponds to a diagram that could be drawn as
		$$ \tikzfig{figures/G3} $$
		\item If $\underline{n} = \{1\}$ and $\underline{m} = \{1,2,3\}$ as well as
			\begin{align*}
				p \: & : \: 1 \to X_1, \: \\[2pt]
				q \: & : \: i \mapsto W_i,
			\end{align*}
			then we get a cospan that corresponds to the gs-monoidal string diagram drawn as
		$$ \tikzfig{figures/G4} $$
	\end{enumerate}
	Note that we have secretly used the convention to draw the discrete hypergraph $\underline{n}$ (representing the input) on the bottom and $\underline{m}$ (representing the output) on the top. Moreover, we used the convention that wires at the bottom of a box are input wires and wires on the top are output-wires.
\end{example}

\begin{remark}
	Throughout the rest of the paper, we will omit the numbering of input and output based on the convention that inputs and outputs are numbered from left to right. Moreover, we will only label the boxes by their type, highlighting that the causal mechanisms are identical. For example, we will draw the string diagram \Cref{ex:ConstructionGsMonoidal} (ii) in simplified form as follows:
	$$ \tikzfig{figures/G5} $$
	Moreover, this string diagram is also gs-monoidal since it is acyclic and left monogamous. In contrast, the string diagram in \Cref{ex:ConstructionGsMonoidal} (i) is not left monogamous since the wire $X_1$ does not arise from a unique global input. 
	
\end{remark}

For further examples and non-examples of gs-monoidal string diagrams, we refer to \Cref{fig:hypergraph_morphism_examples}. 

Pictorially, an acyclic hypergraph does not contain a family of wires which form a loop. Further, left monogamy requires that every wire in the hypergraph arises as either a global input or as an output of a box in precisely one way, ensuring that no ``merging'' of wires occurs. See \Cref{fig:hypergraph_morphism_examples} for an illustration of all of this.

\begin{figure}
\centering
\begin{subfigure}[b]{0.28\textwidth}
\begin{equation*} 
\tikzfig{figures/hypergraph_example_morphism1} 
\end{equation*}
\subcaption{}
\label{stringdiagram1}
\end{subfigure}\hspace{1cm}
\begin{subfigure}[b]{0.28\textwidth}
\begin{equation*}
\tikzfig{figures/hypergraph_example_morphism2} 
\end{equation*} 
\subcaption{}
\label{stringdiagram2}
\end{subfigure}\hspace{1cm}
\begin{subfigure}[b]{0.28\textwidth}
\begin{equation*}
\tikzfig{figures/hypergraph_example_morphism3}
\end{equation*} 
\caption{}
\label{stringdiagram3}
\end{subfigure}
\caption{One example (\subref{stringdiagram1}) and two non-examples (\subref{stringdiagram2},\hspace{2pt}\subref{stringdiagram3}) of gs-monoidal string diagrams, where $\Sigma$ is the hypergraph from \Cref{fig:hypergraph}. The hypergraph in (\subref{stringdiagram2}) is not left monogamous and (\subref{stringdiagram3}) is not acyclic.
}
\label{fig:hypergraph_morphism_examples}
\end{figure}

The notion of gs-monoidal string diagram is the main ingredient for constructing a gs-monoidal category whose morphisms are freely generated by the wires and boxes in a fixed hypergraph $\Sigma$.
Indeed we can now define the category $\cat{FreeGS}_{\Sigma}$ as follows:
\begin{enumerate}[label=(\roman*)]
	\item Objects are all hypergraph morphisms $\sigma: \underline{n} \to \Sigma$ for $n \in \mathbb{N}$, or equivalently finite sequences of wires in $\Sigma$.
	\item Morphisms are the isomorphism classes of gs-monoidal string diagrams.
\end{enumerate}
Composition in $\cat{FreeGS}_\Sigma$ is defined by a pushout which coincides with the way of stacking and connecting up drawings of string diagrams. The gs-monoidal structure likewise corresponds to the obvious operations in terms of string diagrams. We refer to~\cite{Fr22} for details.
In the following, we will not distinguish between a gs-monoidal string diagram and its isomorphism class.

Although $\cat{FreeGS}_\Sigma$ is a gs-monoidal category, it is typically not a Markov category. For example, the first step in the following simplification does not hold since the (cospans of) hypergraphs are not isomorphic, while the second equation does hold:
\begin{equation} 
\label{eq:eliminablemorphism} 
\tikzfig{figures/elim_boxes1} \quad \neq \quad \tikzfig{figures/elim_boxes2} \quad = \quad \tikzfig{figures/elim_boxes3}
\end{equation}
In the following, we define the free Markov category $\cat{FreeMarkov}_{\Sigma}$ by taking a quotient of $\cat{FreeGS}_{\Sigma}$ which enforces Equation \eqref{eq:naturality}, so that also the first equation above becomes true.

\begin{definition}
Let
\begin{equation}
\label{eliminable}
\varphi \coloneqq \begin{tikzcd}
& G & \\
\underline{n} \arrow{ru}{p}  & & \underline{m} \arrow[swap]{lu}{q}
\end{tikzcd}
\end{equation} 
be a gs-monoidal string diagram.
\begin{enumerate}[label=(\roman*)]
	\item A box $b \in B(G)$ is called \emph{eliminable} if each output of $b$ gets discarded, i.e.\ if for every $W \in W(G)$ such that $\mathsf{out}(b,W) > 0$ we have
	\begin{enumerate}[label=(\alph*)]
		\item $q^{-1}(W) = \emptyset$.
		\item $\mathsf{in}(b', W) = 0$ for every box $b' \in B(G)$.
	\end{enumerate}
	\item $\varphi$ is called \emph{normalized} if it contains no eliminable boxes.
\end{enumerate}
\end{definition}

Every gs-monoidal string diagram has a normalized version obtained by iteratively applying the rule of Equation \eqref{eq:naturality} to any eliminable box. Since every diagram is finite, this procedure terminates after finitely many steps, and we reach the normalized version. In addition, this diagram is unique since the order of elimination does not matter.

The free Markov category $\cat{FreeMarkov}_\Sigma$ is now defined just as $\cat{FreeGS}_\Sigma$, but with morphisms restricted to the normalized gs-monoidal string diagrams. 
The composition of morphisms is then defined as composition in $\cat{FreeGS}_\Sigma$ followed by normalization since the composition of two normalized diagrams need not be normalized.
See~\cite{Fr22} for details. In particular, any normalized gs-monoidal string diagram $\varphi$ as in \Cref{eliminable} is a morphism of the form
\begin{equation}
\label{phi_morphism}
	\varphi \: : \: \bigotimes_{i=1}^n \mathsf{type}(p(i)) \longrightarrow \bigotimes_{j=1}^m \mathsf{type}(q(j))
\end{equation}
in $\cat{FreeMarkov}_\Sigma$.

\begin{example}
The morphism 
\begin{equation*} \varphi \quad = \quad \tikzfig{figures/NonConectednessExample2} \end{equation*} 
is not normalized since the output of $b$ gets discarded. Applying Equation \eqref{eq:naturality}, also the output of $c$ get discarded. Therefore the normalization of $\varphi$ is
\medskip
\begin{equation*}
	\norm(\varphi) \quad = \quad	\tikzfig{figures/NonConectednessExample3}
\end{equation*}  
\end{example}
In general, normalizing a gs-monoidal string diagram defines a strict gs-monoidal functor
\[
	\norm : \cat{FreeGS}_\Sigma \to \cat{FreeMarkov}_\Sigma
\]
that is identity-on-objects.

\subsection{Causal models and causal compatibility}
\label{sec:compatibility}

We now introduce the notion of a generalized causal model and define when a morphism in a Markov category is considered compatible with a generalized causal model.

\begin{definition}[{\cite[Definition~7.1]{Fr22}}]
Given a hypergraph $\Sigma$, a \emph{generalized causal model} is a normalized gs-monoidal string diagram (represented by \eqref{eliminable}) over $\Sigma$ such that $q$ is injective.
\end{definition}
Intuitively, a generalized causal model is a morphism in $\cat{FreeMarkov}_{\Sigma}$ where the injectivity of $q$ ensures that each wire is connected to at most one output. This lets us identify the global inputs and outputs with the wires in $W(G)$ (see \Cref{not:CausalModel}). In the traditional terminology of random variables, the injectivity of $q$ guarantees that different outputs correspond to different variables. \Cref{fig:ExCausalModels} shows examples of generalized causal models.

One relevant subclass of generalized causal models are pure blooms. These morphisms represent causal models in which all variables are observed, i.e.\ every wire is an output in exactly one way, such as in \Cref{fig:ExCausalModels}(c).

\begin{definition}[\cite{Fr22}]
\label{def:purebloom}
Let $\varphi$ be a generalized causal model represented by a gs-monoidal string diagram
\begin{equation*}
\varphi = \begin{tikzcd}
& G & \\
\underline{n} \arrow{ru}{p}  & & \underline{m} \arrow[swap]{lu}{q}
\end{tikzcd}
\end{equation*} 
Then $\varphi$ is called \emph{pure bloom} if $q$ is a bijection on wires.
\end{definition}

\begin{remark}
\label{rem:DAGRep}
Bayesian networks are a strict subclass of generalized causal models. More precisely, a generalized causal model $\varphi$ arises from a DAG if and only if
\begin{enumerate}[label=(\roman*)]
	\item $\varphi$ is pure bloom (no latent variables),
	\item $\In(\varphi) = \emptyset$ (no global inputs), and
	\item $|\Out(b)| = 1$ for every box $b \in B(G)$. 
\end{enumerate}
\end{remark}

\begin{figure}[h!]
\centering
	\subcaptionbox{Non-example of a generalized causal model.}[.3\linewidth]{\tikzfig{figures/NoCausalModel}}
	\subcaptionbox{The Bell scenario.}[.3\linewidth]{\tikzfig{figures/BellScenarioMorphism}}
	\subcaptionbox{The instrumental scenario with every variable being observed.}[.3\linewidth]{\tikzfig{figures/InstrumentalScenarioNoVar}}
	\caption{(Non-)Examples of generalized causal models. While (b), (c) are generalized causal models, the string diagram (a) is not since the output wire of $g$ is connected to two global outputs. Concerning \Cref{def:purebloom}, the generalized causal model (b) is not a pure bloom since the output of $\Lambda$ is not connected to a global output. The generalized causal model (c) is a pure bloom without inputs. For a further analysis of (c) regarding $d$-separation, see \Cref{ex:dSepExamples}\ref{ex:dSepExamplesI}. In all three examples, we have $\Sigma = G$ and assume $\mathsf{type}$ to be the identity map for simplicity.}
    \label{fig:ExCausalModels}
\end{figure}

For an example of a pure bloom morphism, we refer to \Cref{fig:ExCausalModels}. We will show that the soundness of the $d$-separation criterion holds for arbitrary generalized causal models (see \Cref{cor:dseparationCondIndependence}) while the completeness holds for pure bloom morphisms (see \Cref{thm:causalCompdSep}).

To define causal compatibility, we make the following assumption for the rest of the paper for convenience:

\begin{assumption}
	\label{strict}
	Throughout, $\cat{C}$ is a strict Markov category.
\end{assumption}

A monoidal category is strict if monoidal products like $A \otimes (B \otimes C)$ and $(A \otimes B) \otimes C$, or $B \otimes A$ and $A \otimes B$ are not only isomorphic but identical. We refer to \cite[Section~1.3]{He19} for an introduction to strict monoidal categories.

Although most examples like $\cat{FinStoch}$, $\cat{BorelStoch}$ or $\cat{Stoch}$ already fail strictness, this does not exclude these examples since we can always work with a strictification instead \cite[Theorem~10.17]{Fr20}, which satisfies \Cref{strict}.
On the other hand, our free Markov categories $\cat{FreeMarkov}_\Sigma$ already satisfy this condition ``on the nose''.
In any case, \Cref{strict} is a useful convenience that holds without loss of generality.

\begin{notation}
\label{not:CausalModel}
For the rest of the paper, we will assume that $\varphi$ is a generalized causal model with
\begin{equation*}
\varphi \quad \coloneqq \quad \begin{tikzcd} 
& G & \\
\underline{n} \arrow{ru}{p}  & & \underline{m} \arrow[swap]{lu}{q}
\end{tikzcd}
\end{equation*}
which becomes a cospan in $\cat{FinHyp} / \Sigma$ through $\mathsf{type} : G \to \Sigma$. 

We identify inputs and outputs with the wires they map to under $p$ and $q$ and refer to them as such. In particular, we define
\begin{align}
	\label{phi_defn}
	\In(\varphi) &\coloneqq p(\underline{n}) \subseteq W(G)\\
	\label{phi_defn2}
	\Out(\varphi) &\coloneqq q(\underline{m}) \subseteq W(G)
\end{align} 
for the set of all input/output wires. If $\varphi$ is a pure bloom morphism, then $\Out(\varphi) = W(G)$.
\end{notation}

\noindent
In the following, we present the notion of causal compatibility for a generalized causal model $\varphi$. Intuitively, a morphism $f$ in any Markov category $\cat{C}$ is compatible with $\varphi$ if we can plug in a morphism from $\cat{C}$ into every box in $B(\Sigma)$ in such a way that the composite is exactly $f$, and such that the global input and output wires of $\varphi$ correspond to a given tensor factorization of the domain and codomain of $f$:

\begin{definition}[Compatibility]
	\label{def:compatibility}
	For $\Sigma$ a hypergraph, let $\varphi$ be a generalized causal model as in \eqref{phi_morphism}.
	Let further
	\[
		f : \bigotimes_{i=1}^n W'_i \to \bigotimes_{j=1}^m V'_j
	\]
	be a morphism in any Markov category $\cat{C}$ satisfying \Cref{strict}, equipped with a fixed tensor decomposition of its domain and codomain as indicated.

	We call $f$ \emph{compatible with} $\varphi$ if there exists a strict Markov functor\footnote{i.e.\ a strict symmetric monoidal functor which preserves the comonoid structure.} $F: \cat{FreeMarkov}_{\Sigma} \to \cat{C}$ such that:
\begin{enumerate}[label=(\roman*)]
	\item We have
		\begin{equation}
			\label{F_wires_map}
			W'_i = F(\mathsf{type}(p(i))), \qquad V'_j = F(\mathsf{type}(q(j)))
		\end{equation}
		for all input indices $i = 1,\ldots,n$ and output indices $j = 1,\ldots,k$.
	\item $f = F(\varphi)$.
\end{enumerate}
\end{definition}
This generalizes the functorial definition of causal models as first studied in~\cite{Fo13}.

Note that the functor $F$ must assign to every wire type (i.e., wire in $\Sigma$) a corresponding object in the category $\cat{C}$. This implies that wires in $W(G)$ with identical types must map to the same object in $\cat{C}$. For example, one may consider a situation where $f$ is a probability distribution with no inputs and all output variables are real-valued. In this case, we have $V'_j = \mathbb{R}$ for all $j$, and one may want to consider a causal model $\varphi$ in which all wires are likewise of the same type. For the string diagrammatic picture of this definition, we refer to \Cref{ssec:CausalCompatibility}.

Similarly, the hypergraph morphism $\mathsf{type} : G \to \Sigma$ assigns to each box in $G$ a specific ``type'' box in $\Sigma$. This means that under $F$, any two boxes with the same type must map to the same morphism in $\cat{C}$.
This is why generalized causal models in our sense can naturally incorporate the condition that several causal mechanisms must be the same, namely when choosing the types in a way enforcing this. This has already been highlighted in our discussion of gs-monoidal string diagrams (see \Cref{ex:ConstructionGsMonoidal}).

In the following, we denote for every wire $X \in W(G)$ in $\varphi$ the corresponding object $F(\mathsf{type}(X))$ in $\cat{C}$ by $X'$.
Similarly, for every set of wires $\mathcal{W} \subseteq W(G)$ in $\varphi$, we denote the corresponding multiset of wires in $\cat{C}$ by $\mathcal{W}'$. For the rest of the paper, we will associate this multiset with the corresponding tensor product in $\cat{C}$ obtained by tensoring its elements, where we ignore the question of how to order the factors.

\section{Conditionals and conditional independence}
\label{sec:Conditional}

In this section, we review a central assumption to prove the $d$-separation criterion, namely the existence of conditionals. Moreover, we introduce a string diagrammatic definition of conditional independence. 

\subsection{Existence of conditionals}

To use the $d$-separation criterion to detect causal compatibility, we need in addition the existence of conditionals. This notion has been studied in categorical terms in \cite{Ch19b} in a special case, where the authors call it \emph{admitting disintegration}, and subsequently in \cite{Fr20} in general. In the following, we briefly review the definition following \cite[Section 11]{Fr20}.

\begin{definition}\label{def:existenceConditionals}
Let $\cat{C}$ be a Markov category. We say that $\cat{C}$ has \emph{conditionals} if for every morphism $f : \mathcal{A} \to \mathcal{X} \otimes \mathcal{Y}$, there is a morphism $f_{|_\mathcal{X}}$ such that
\begin{equation*}
\tikzfig{figures/ConditionalsDefinition1} \quad = \quad \tikzfig{figures/ConditionalsDefinition2}
\end{equation*} 
\end{definition}

Note that $\mathcal{X}$, $\mathcal{Y}$ are single output spaces. However, since these can arise as products of multiple output spaces each, we use notation that suggests interpreting $\mathcal{X}$ and $\mathcal{Y}$ as multisets of spaces. Since Markov categories are symmetric monoidal, we can use the multisets or tensor product pictures interchangeably.

Examples of categories having conditionals are $\cat{FinStoch}$, $\cat{Gauss}$ as well as $\cat{BorelStoch}$. In contrast, $\cat{Stoch}$ does not have conditionals (see \cite[Examples 11.6--11.8]{Fr20} and references therein).

In the following, we prove that the deterministic Markov category $\cat{FinSet}$ and the possibilistic Markov category $\cat{FinSetMulti}$ have conditionals.
\begin{proposition}\label{prop:SetMultiConditionals}
$\cat{FinSet}$ and $\cat{FinSetMulti}$ have conditionals.
\end{proposition}
\begin{proof} We show the statements separately, doing the case of single objects $A$, $X$ and $Y$ without loss of generality.
\begin{enumerate}[label=(\roman*)]
	\item In $\cat{FinSet}$, any map $f: A \to X \times Y$ is uniquely determined by its components $f_X : A \to X$ and $f_Y : A \to Y$, which are equivalently its marginals. Defining $f_{|_X}: X \times A \to Y$ as $f_{|_X}(x, a) \coloneqq f_X(a)$ trivially shows the statement.\footnote{The same argument shows that every cartesian monoidal category has conditionals.}
	\item Let $f : A \to X \times Y$ be a morphism in $\cat{FinSetMulti}$ with $f(x,y \, | \, a) \in \{0,1\}$ for $x \in X, y \in Y, a \in A$. Marginalizing over $Y$, we get
	\[
		f_Y(x \, | \, a) = \sum_{y \in Y} f(x,y \, | \, a) = \left\{\begin{array}{cl} 1 & \text{if } \exists y \in Y \text{ s.t. } f(x,y \, | \, a) = 1, \\ 0 & \text{else.} \end{array}\right.
	\]
where we have used the equation $1 + 1 = 1$.
Defining  $f(y \, | \, a,x) \coloneqq f(x,y \, | \, a)$, we have
$$f(x,y \, | \, a) = f(y \, | \, a,x) \cdot f(x \, | \, a),$$
which shows the statement.
\qedhere
\end{enumerate}
\end{proof}
Note that the proof also generalizes trivially to $\cat{Set}$ and $\cat{SetMulti}$.

\subsection{Conditional independence}

A second ingredient of the $d$-separation criterion is the definition of conditional independence. The following definition has been introduced in several works (see for example \cite{Ch19b,Co11}) and shown to satisfy the well-known semigraphoid properties. In addition, Fritz \cite{Fr20} shows that it is still meaningful to define conditional independence in the absence of conditionals.

\begin{definition}
\label{def:CondIndependence}
A morphism $f : I \to \mathcal{X} \otimes \mathcal{Z} \otimes \mathcal{Y}$ in $\cat{C}$ displays the \emph{conditional independence} $\mathcal{X} \perp \mathcal{Y} \ | \ \mathcal{Z}$ if it can be written as
\begin{equation*}
\tikzfig{figures/tripartiteState} \quad = \quad \tikzfig{ConditionalIndependence}
\end{equation*} 
\end{definition}

In other words, $\mathcal{X} \perp \mathcal{Y} \ | \ \mathcal{Z}$ holds if $f$ is compatible with the generalized causal model which corresponds to the string diagram on the right-hand side (where all boxes are of distinct type, and we leave labels off for simplicity).

\begin{remark}
\label{rem:condIndepEmptyset}
The conditional independence $\mathcal{X} \perp \emptyset \ | \ \mathcal{Z}$ is equivalent to the existence of the conditional $r_{|_\mathcal{Z}}$. Therefore, if $\cat{C}$ has conditionals, then every state $r$ satisfies $\mathcal{X} \perp \emptyset \ | \ \mathcal{Z}$ for every tensor factorization of its codomain $\mathcal{X} \otimes \mathcal{Z}$. 
\end{remark}

While conditional independence coincides with the standard definition in probabilistic Markov categories like $\cat{FinStoch}$, $\cat{Gauss}$ or $\cat{BorelStoch}$, we show in the following an example of conditional independence for $\cat{FinSetMulti}$.

\begin{example}
\begin{enumerate}[label=(\roman*)]
	\item Let $X = Y = Z$ be finite sets and $f(x,y,z) = \delta_{x,y,z}$, where $\delta_{x,y,z} = 1$ if and only if $x = y = z$.
		In other words, a triple of outcomes $(x,y,z)$ is possible if and only if $x=y=z$. Then $f$ displays the conditional independence $X \perp Y \ | \ Z$ since
	$$f(x,y,z) = f(x \, | \, z) \cdot f(y \, | \, z) \cdot f(z),$$
	with $f(z) = 1$ for every $z \in Z$ and $f(x \, | \, z) = \delta_{x,z}$ and $f(y \, | \, z) = \delta_{y,z}$. Note that in this representation $f(x \, | \, z)$ and $f(y \, | \, z)$ correspond to the left and the right boxes in the decomposition of \Cref{def:CondIndependence}.
	
	\item Let $X = Y = Z$ be finite sets and similarly $f(x,y,z) = \delta_{x,y} \cdot \delta_{z,z_{0}}$ for fixed $z_0 \in Z$, which has marginal $f_Z(z) = \delta_{z,z_{0}}$. Then $f$ does not display the conditional independence $X \perp Y \ | \ Z$. Assuming the contrary, we would have $$\delta_{x,y} = f(x \, | \, z)  \cdot f(y \, | \, z)$$
	which cannot be the case as soon as our sets have at least two elements.
	
\end{enumerate}

\end{example}

As we have already done in the previous definition, we write $\mathcal{X}, \mathcal{Y}, \ldots$ for arbitrary lists of objects in $\cat{C}$.
We also allow for implicit reordering of these lists---effectively identifying these lists with multisets---and omit mention of the relevant compositions of $f$ by swap morphisms.\footnote{Of course, these kinds of bookkeeping mechanisms are also present in the traditional notation of probability distributions and measures, though rarely made explicit in that context.}
This allows us to talk about conditional independence with respect to any tripartition of the tensor factors in the codomain of any state $f$.

With this in mind, we now introduce a notion of conditional independence for morphisms with inputs. This notion is the key ingredient of the $d$-separation criterion presented in \Cref{ssec:causalCompatibility}, and it is the categorical generalization of the \emph{transitional conditional independence} introduced recently in \cite[Definition~3.1]{Fo21}.

\begin{definition}
\label{def:condInd_diff}
A morphism $f: \mathcal{A} \to \mathcal{X} \otimes \mathcal{Y} \otimes \mathcal{Z}$ in $\cat{C}$ displays the \emph{conditional independence} $\mathcal{X} \perp \mathcal{Y} \ | \ \mathcal{Z}$ if there exists a factorization of the form
\begin{equation*}
\tikzfig{figures/tripartiteMorphism} \quad = \quad \tikzfig{figures/ConditionalIndependenceMorphism}
\end{equation*} 
\end{definition}

\begin{remark}
\label{rem:condInd}
Note that the above definition of conditional independence is not symmetric, i.e.\ $\mathcal{X} \perp \mathcal{Y} \ | \ \mathcal{Z}$ does not necessarily imply $\mathcal{Y} \perp \mathcal{X} \ | \ \mathcal{Z}$.
If $\cat{C}$ has conditionals then $\mathcal{X} \perp \mathcal{Y} \ | \ \mathcal{Z}$ rewrites to
\begin{equation*}
\tikzfig{figures/tripartiteMorphism} \quad = \quad \tikzfig{figures/ConditionalIndependenceMorphism} \quad = \quad \tikzfig{figures/ConditionalIndependenceMorphismLeg}
\end{equation*}
which highlights the asymmetry. Moreover, if $\mathcal{A}$ is trivial, then the conditional independence coincides with \Cref{def:CondIndependence}.

Due to the asymmetry, the outputs in $\mathcal{Y}$ might contain information about the inputs $\mathcal{A}$ which cannot be retrieved just from the outputs in $\mathcal{Z}$. On the other hand, the outputs in $\mathcal{X}$ are generated using only the information from the outputs in $\mathcal{Z}$. The local Markov property that we will use in \Cref{def:MarkovProperties} explicitly highlights this asymmetry: the output of a box (corresponding to $\mathcal{X}$) is independent of its non-descendants ($\mathcal{Y}$) given its inputs ($\mathcal{Z}$). Every global input is non-descendant of any box; however, not every global input wire is directly an input to the box itself.  
\end{remark}

\begin{example}
\label{ex:CondIndSet}
	In the Markov category $\cat{Set}$, a function $f: A \to X \times Z \times Y$ is always such that each one of its components $f_X$, $f_Y$ and $f_Z$ is a deterministic function of $a \in A$. Such $f$ displays the conditional independence $X \perp Y \ | \ Z$ if and only if the $X$ output can be rewritten as a function of the $Z$ output only, i.e.\ $x = g(z)$, or equivalently if and only if $f_X$ factors through $f_Z$.

\end{example}

\section{Deciding causal compatibility with \texorpdfstring{$d$}{d}-separation}
\label{sec:decidingComp}

The main goal of this section is to prove that the $d$-separation criterion \cite[Section 1.2.3]{Pe09} correctly detects causal compatibility not just in discrete probability but in all Markov categories with conditionals. 
To this end, we introduce a categorical notion of $d$-separation phrased in terms of connectedness of the gs-monoidal string diagram representing the causal model. We then show that this notion coincides with the classical notion of $d$-separation whenever the latter applies.

This part is structured as follows. In \Cref{ssec:dseparation}, we introduce the categorical notion of $d$-separation on generalized causal models. Moreover, we show in \Cref{pro:dSepCoincide} that this notion coincides with the classical notion of $d$-separation for all those generalized causal models that correspond to DAGs. In \Cref{ssec:causalCompatibility}, we first show that $d$-separation implies conditional independences for compatible morphisms in Markov categories with conditionals. We then show in \Cref{thm:causalCompdSep} that $d$-separation fully characterizes causal compatibility.

\subsection{Categorical \texorpdfstring{$d$}{d}-separation}
\label{ssec:dseparation}

For a gs-monoidal string diagram
\begin{equation*}
	\varphi \,= \begin{tikzcd}
	& G & \\
	\underline{n} \arrow{ru}{p}  & & \underline{m} \arrow[swap]{lu}{q}
\end{tikzcd} 
\end{equation*} 
and a set of output wires $\mathcal{Z} \subseteq \Out(\varphi)$, we define a new gs-monoidal string diagram $\Cut_{\mathcal{Z}}(\varphi)$ obtained by removing the wires in $\mathcal{Z}$ in the following sense. Its underlying hypergraph $H$ is such that the set of boxes is the same, $B(H) \coloneqq B(G)$, while the set of wires is $W(H) \coloneqq W(G) \setminus \mathcal{Z}$.
Each box has the same input and output wires as before, expect in that those in $\mathcal{Z}$ are simply removed, which lowers the arities of the boxes correspondingly.
We also remove all occurrences of wires in $\mathcal{Z}$ from the global inputs and outputs, and this results in a gs-monoidal string diagram
\begin{equation*}
	\Cut_{\mathcal{Z}}(\varphi) \coloneqq \begin{tikzcd}
	& H & \\
	\underline{n}' \arrow{ru}{p'}  & & \underline{m}' \arrow[swap]{lu}{q'}
\end{tikzcd}
\end{equation*}
where $\underline{n}' \subseteq \underline{n}$, $\underline{m}' \subseteq \underline{m}$ and $p'$, $q'$ are the restrictions of $p$ and $q$, respectively.
Note that $\Cut_{\mathcal{Z}}(\varphi)$ is generally not a morphism in $\cat{FreeMarkov}_{\Sigma}$ anymore since it is not normalized. However, it can be understood as a morphism in $\cat{FreeGS}_{H}$. \Cref{ex:categoricalSeparation} will present a few examples.

We next introduce some notation for paths of wires.

\begin{definition}
\label{defn:categoricalSep}
Let $\varphi$ be a gs-monoidal string diagram in $\cat{FreeGS}_{\Sigma}$.
\begin{enumerate}[label=(\roman*)]
	\item\label{path_def} An \emph{undirected path} between two wires $X,Y \in W(G)$ is a sequence of wires
		\[
			X = W_1, \, W_2, \, \ldots, \, W_{n}, \, W_{n+1} = Y
		\]
		together with a sequence of boxes $b_1, \ldots, b_n \in B(G)$ such that
		\begin{equation*}
		\In(b_i, W_i) + \Out(b_i, W_i) \geq 1 \quad \text{ and } \quad  \In(b_i, W_{i+1}) + \Out(b_i, W_{i+1}) \geq 1.
		\end{equation*} 
	If there exists an undirected path between $X$ and $Y$, then we write $X \minus Y$.
	\item\label{underlying_DAG} For two wires $A, B \in W(G)$, we write $A \rightarrow B$ if there exists a box $b \in B(G)$ such that
		\begin{align} \In(b, A) = 1 \quad \text{ and } \quad \Out(b, B) = 1.\end{align} 
	\item\label{doubleHead} For two wires $A,B \in W(G)$, we write $A \twoheadrightarrow B$ if there exists a sequence of wires $W_1, \ldots, W_{n} \in W(G)$ such that
		\begin{equation}
		\label{eq:directedPath} A \rightarrow W_1 \rightarrow \ \ldots \ \rightarrow W_n \rightarrow B.
		\end{equation} 
\end{enumerate}
\end{definition}

Thus, an undirected path in $\varphi$ may traverse a box not just from input or output or vice versa, but also from input to input or output to output.

The intuitive idea behind the following definition of $d$-separation, as already briefly discussed in~\cite[Remark~7.2]{Fr22}, was communicated to us by Rob Spekkens.

\begin{definition}[Categorical $d$-separation]
	Let $\varphi$ be a generalized causal model. For three disjoint sets of output wires $\mathcal{X}, \mathcal{Y}, \mathcal{Z} \subseteq \Out(\varphi)$, we say that $\mathcal{Z}$ \emph{$d$-separates} $\mathcal{X}$ and $\mathcal{Y}$ if
\begin{equation*}
\Cut_{\mathcal{Z}}(\varphi_{\mathcal{X}, \mathcal{Y}, \mathcal{Z}})
\end{equation*} 
has no undirected path between any output in $\mathcal{X}$ and any output in $\mathcal{Y}$.
\end{definition}

Here, $\varphi_{\mathcal{W}} \coloneqq \norm(\Del{\mathcal{W}^{c}} \circ \varphi)$ with $\mathcal{W}^{c} = \Out(\varphi) \setminus \mathcal{W}$ denotes the marginal on $\mathcal{W}$ in $\cat{FreeMarkov}_{\Sigma}$.
The absence of an undirected path as in the definition manifests itself in the string diagrams simply as topological disconnectedness.

\begin{example}
	The following examples constitute the basic components of ``classical'' $d$-separation\footnote{See~\cite{JaZa17} for an account of chains, forks and colliders featuring both numerical examples and a categorical formalism.} and illustrate the simplicity of categorical $d$-separation.
In all cases, the unlabeled boxes denote distinct generators, i.e.~distinct boxes in the generating hypergraph $\Sigma$.
\label{ex:categoricalSeparation}
\begin{enumerate}[label=(\roman*)]
	\item Fork: consider the morphism
	\begin{equation*}
	\varphi \quad = \quad \tikzfig{figures/tripartiteDSeparationExample}
	\end{equation*} 
	Then $Z$ $d$-separates $X$ from $Y$ since
	\begin{equation*} 
	\Cut_{Z}(\varphi) \quad = \quad \tikzfig{figures/tripartiteDSeparationExample2}
	\end{equation*} 
	has disconnected $X$ and $Y$.
	\item Chain: consider the morphism
	\begin{equation*}
	\varphi \quad = \quad \tikzfig{figures/sequentialDSeparationExample}
	\end{equation*} 
	Then $Z$ $d$-separates $X$ from $Y$ since
	\begin{equation*} 
	\Cut_{Z}(\varphi) \quad = \quad \tikzfig{figures/sequentialDSeparationExample2}
	\end{equation*} 
	has disconnected $X$ and $Y$.
	\item Collider: consider the morphism
	\begin{equation*}
	\varphi \quad = \quad \tikzfig{figures/NonConectednessExample1} 
	\end{equation*} 
	Then $Z$ does \emph{not} $d$-separate $X$ from $Y$ since we have
	\begin{equation*}
	\varphi_{X,Z,Y} \quad = \quad \tikzfig{figures/NonConectednessExample4} \quad =  \quad \tikzfig{figures/NonConectednessExample5}
	\end{equation*} 
	and therefore
	\begin{equation*}
	\Cut_{Z}(\varphi_{X,Z,Y}) \quad = \quad \tikzfig{figures/NonConectednessExample6}
	\end{equation*} 
	which still contains an undirected path $X - Y$. The same reasoning applies when $\mathcal{Z} = \{W\}$ or $\mathcal{Z} = \{W, Z\}$. However, if $\mathcal{Z} = \emptyset$, then
	\begin{equation*}
	\varphi_{X,Y} \quad = \quad \tikzfig{figures/NonConectednessExample2_unlabelled} \quad = \quad \tikzfig{figures/NonConectednessExample3_unlabelled}
	\end{equation*} 
	which has disconnected $X$ and $Y$.
	Therefore $\emptyset$ $d$-separates $X$ and $Y$.
	\item Consider the morphism
	\begin{equation*}
	\varphi \quad = \quad \tikzfig{figures/tripartiteDSeparationExampleMarginal}
	\end{equation*} 
	This morphism differs from (i) by adding an independent process $A \to W$. The normalized marginal $\varphi_{X,Y,Z}$ is given by
	\begin{equation*}
	\varphi_{X,Y,Z} \quad = \quad \tikzfig{figures/tripartiteDSeparationExampleMarginal2}
	\end{equation*} 
	which again shows that $Z$ $d$-separates $X$ from $Y$ since cutting $Z$ makes $X$ and $Y$ disconnected.

\end{enumerate}
\end{example}

In order to define the classical notion of $d$-separation, we note that certain gs-monoidal string diagrams have an underlying DAG, given by using wires as nodes and taking the edges to be $\to$ as in~\Cref{defn:categoricalSep}.
We use the term \emph{DAG path} to refer to an undirected path in this DAG, i.e.~to a sequence of wires connected by boxes from input to output or vice versa (but \emph{not} from input to input or output to output). For example, in the string diagram
$$\tikzfig{figures/DAGpath}$$
there are two undirected paths witnessing $W_1 - W_3$, namely on the one hand the direct path $W_1, W_3$ and on the other hand the indirect $W_1, W_2, W_3$. Only the second is a DAG path since its two steps are input to output and output to input, while going from $W_1$ to $W_3$ directly would be input of $b$ to input of $b$.
We also define the ancestor wires of a given set of wires as
\begin{equation*}
\An(\mathcal{X}) = \{U \in W(G): \exists X \in \mathcal{X} \textrm{ such that } U \twoheadrightarrow X \}
\end{equation*} 
and the set of descendant wires as
\begin{equation*}
\Dec(\mathcal{X}) = \{U \in W(G): \exists X \in \mathcal{X} \textrm{ such that } X \twoheadrightarrow U \},
\end{equation*} 
where $A \twoheadrightarrow B$ is defined in~\Cref{defn:categoricalSep}\ref{doubleHead}.
Note that $\mathcal{X} \subseteq \An(\mathcal{X}), \Dec(\mathcal{X})$.
To state the following classical definition \cite[Definition 1.2.3]{Pe09} in our language, we restrict further to those generalized causal models that are determined by their underlying DAGs.
As noted in \Cref{rem:DAGRep}, a causal structure $\varphi$ is represented by a DAG, if the string diagram is pure bloom, every box has exactly one output, and it has no global inputs, i.e.\ $\In(\varphi) = \emptyset$.

\begin{definition}[Classical $d$-separation]
\label{def:classicalDseparation}
Let $\varphi$ be a pure bloom causal model with $\In(\varphi) = \emptyset$ and such that every box has exactly one output. Then:
\begin{enumerate}[label=(\alph*)]
	\item A DAG path $p$ in $\varphi$ is called \emph{$d$-separated} by a set of wires $\mathcal{Z} \subseteq \Out(\varphi)$ if at least one of
the following two conditions holds:
\begin{enumerate}[label=(\roman*)]
	\item $p$ contains a chain $W \rightarrow Z \rightarrow U$ or a fork $W \leftarrow Z \rightarrow U$ for some $Z \in \mathcal{Z}$.
	\item $p$ contains a collider $W \rightarrow M \leftarrow U$ where $M \notin \An(\mathcal{Z})$.
\end{enumerate}
	\item $\mathcal{X}$ is $d$-separated from $\mathcal{Y}$ by $\mathcal{Z}$ if every DAG path between every $X \in \mathcal{X}$ and $Y \in \mathcal{Y}$ is $d$-separated by $\mathcal{Z}$.
\end{enumerate}
\end{definition}

We will now prove the equivalence of categorical $d$-separation with classical $d$-separation for the class of causal models on which the latter is defined. This requires some preparation.

\begin{lemma}
\label{lem:pureBloomDiscard}
Let $\varphi$ be a pure bloom causal model, $b \in B(G)$ a box in $\varphi$ and $\mathcal{W} \subseteq \Out(\varphi)$ a subset of its wires. The following statements are equivalent:
\begin{enumerate}[label=(\roman*)]
	\item\label{bloomlem1} $\Out(b) \cap \An(\mathcal{W}) = \emptyset$.
	\item\label{bloomlem2} $b$ gets discarded in $\varphi_\mathcal{W} = \norm(\Del{\mathcal{W}^{c}} \circ \varphi)$.
\end{enumerate}
\end{lemma}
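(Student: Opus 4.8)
The plan is to show that the boxes surviving the normalization of $\Del{\mathcal{W}^{c}} \circ \varphi$ are \emph{exactly} those possessing an output wire in $\An(\mathcal{W})$; the two implications of the lemma are then the two directions of this characterization. Write $S \coloneqq \{b \in B(G) : \Out(b) \cap \An(\mathcal{W}) \neq \emptyset\}$ for the candidate survivors and $D \coloneqq B(G) \setminus S$. Since $\varphi$ is pure bloom we have $\Out(\varphi) = W(G)$, so composing with $\Del{\mathcal{W}^{c}}$ leaves the hypergraph unchanged and merely restricts the global outputs to $\mathcal{W}$. Each elimination step via~\eqref{eq:naturality} only deletes boxes together with their dead output wires, while keeping the global output set equal to $\mathcal{W}$ throughout. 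I can therefore reason about the process purely combinatorially: at any stage, a box is eliminable precisely when none of its output wires lies in $\mathcal{W}$ nor feeds any remaining box.

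The key combinatorial observation is that $D$ is closed under taking consumers: if $b \in D$, $A \in \Out(b)$, and some box $b'$ has $A$ as an input, then $b' \in D$. Indeed, for every $B \in \Out(b')$ we have $A \rightarrow B$, so $B \in \An(\mathcal{W})$ would force $A \in \An(\mathcal{W})$ and hence $\Out(b) \cap \An(\mathcal{W}) \neq \emptyset$, contradicting $b \in D$; thus $\Out(b') \cap \An(\mathcal{W}) = \emptyset$. Note also that every $b \in D$ has all its outputs outside $\mathcal{W}$, since $\mathcal{W} \subseteq \An(\mathcal{W})$.

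For \ref{bloomlem1}$\Rightarrow$\ref{bloomlem2}, I would eliminate the boxes of $D$ in a reverse topological order, which exists by acyclicity of $G$. When the most downstream not-yet-eliminated box $b^{*} \in D$ is reached, each of its outputs $A$ satisfies $A \notin \mathcal{W}$, and every box consuming $A$ lies in $D$ (by the closure property) and is strictly downstream of $b^{*}$, hence already eliminated; therefore $b^{*}$ is eliminable and gets discarded. By induction all of $D$ is eliminated, so in particular $b$ is discarded whenever $\Out(b) \cap \An(\mathcal{W}) = \emptyset$. Since the normalized diagram is independent of the elimination order, this settles the implication.

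For \ref{bloomlem2}$\Rightarrow$\ref{bloomlem1} I argue the contrapositive: no box in $S$ is ever eliminated. Suppose otherwise and consider the first moment some $b' \in S$ is eliminated, choosing an output $A' \in \Out(b') \cap \An(\mathcal{W})$. If $A' \in \mathcal{W}$, then $A'$ is still a global output and $b'$ is not eliminable, a contradiction. Otherwise a directed path witnesses $A' \twoheadrightarrow W$ for some $W \in \mathcal{W}$ with $A' \neq W$, and its first edge supplies a box $b''$ having $A'$ as an input and an output again in $\An(\mathcal{W})$, so $b'' \in S$. By minimality $b''$ has not yet been eliminated, so $A'$ still feeds $b''$ and $b'$ is not eliminable — again a contradiction. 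Hence every box in $S$ survives, which is the claim. The main obstacle is exactly this bookkeeping of the dynamic elimination process; the two ingredients that tame it are the static downward-closure of $D$ (which lets acyclicity supply a safe elimination order) and the invariance of the global output set $\mathcal{W}$ throughout normalization.
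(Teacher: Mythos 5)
Your proof is correct and follows essentially the same strategy as the paper's: the \ref{bloomlem2}$\Rightarrow$\ref{bloomlem1} direction uses the same argument that a directed path from an output of $b$ to a surviving global output in $\mathcal{W}$ keeps $b$ alive, and the \ref{bloomlem1}$\Rightarrow$\ref{bloomlem2} direction eliminates the non-ancestral boxes starting from the most downstream end via acyclicity, just as the paper does by iteratively peeling off final boxes whose outputs lie in $\Dec(\Out(b))$. Your write-up is somewhat more explicit about the bookkeeping (the closure of $D$ under consumers, the invariance of the global output set, and the induction on the elimination sequence), which is a welcome refinement of the same route rather than a different one.
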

\begin{proof}
	$\ref{bloomlem2} \Longrightarrow \ref{bloomlem1}$: To prove the contrapositive, assume $\exists A \in \Out(b)$ such that $A \in \An(\mathcal{W})$. Then there is a path $A \twoheadrightarrow W$ with $W \in \mathcal{W}$. Since $W$ is still an overall output that does not get discarded, this path is still valid in $\Del{\mathcal{W}^{c}} \circ \varphi$. Therefore $b$ remains in $\norm(\Del{\mathcal{W}^{c}} \circ \varphi)$.

	$\ref{bloomlem1} \Longrightarrow \ref{bloomlem2}$: Consider the set $\Dec(\Out(b))$. By assumption, we have that $\Dec(\Out(b)) \cap \An(\mathcal{W}) = \emptyset$. We show that the box $b$ gets discarded in $\norm(\Del{\Dec(\Out(b))} \circ \varphi)$, which is enough because of $\mathcal{W}^c \supseteq \Dec(\Out(b))$.
By definition of $\Dec(\Out(b))$, there exists a final box\footnote{As defined in~\cite{Fr22}, a final box is one whose outputs are global outputs of $\varphi$ without further copy or discard. Such a box always exists since $\varphi$ is pure bloom and normalized (compare \cite[Lemma 4.6]{Fr22})} $\hat{b}$ such that $\Out(\hat{b}) \subseteq \Dec(\Out(b))$. This shows that $\hat{b}$ gets discarded in $\norm(\Del{\Dec(\Out(\hat{b}))} \circ \varphi)$.

	Define $\widetilde{\varphi} \coloneqq \norm(\Del{\Dec(\Out(\hat{b}))} \circ \varphi)$. Repeating the above procedure with $\widetilde{\varphi}$, we arrive after finite number of steps at $b$ itself being a final box. Since it is then eliminable after composing with $\Del{\Dec(\Out(b))}$, it indeed no longer appears in the normalization.
\end{proof}

We now show the promised equivalence result between categorical $d$-separation and classical $d$-separation in the cases where $\varphi$ represents a causal structure given by a DAG, i.e. $\In(\varphi) = \emptyset$ and every box has a single output.

\begin{proposition}
\label{pro:dSepCoincide}
	Both concepts of $d$-separation coincide on pure bloom causal models $\varphi$ with $\In(\varphi) = \emptyset$ and in which every box has exactly one output.
\end{proposition}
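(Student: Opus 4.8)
The plan is to bring both notions onto the common ground of the \emph{moralized ancestral diagram} and check their equivalence there. Write $\mathcal{S} \coloneqq \mathcal{X} \cup \mathcal{Y} \cup \mathcal{Z}$ and $A \coloneqq \An(\mathcal{S})$. First I would use \Cref{lem:pureBloomDiscard} to pin down the marginal $\varphi_{\mathcal{X},\mathcal{Y},\mathcal{Z}} = \varphi_{\mathcal{S}}$: since every box has a single output and $\varphi$ is pure bloom, a box survives the marginalization precisely when its unique output wire lies in $A$, so the wires of $\varphi_{\mathcal{S}}$ are exactly $A$ and its boxes are those producing them. The crucial observation is then that the undirected-path connectivity of $\varphi_{\mathcal{S}}$ coincides with connectivity in the moral graph of the ancestral sub-DAG: a box with output $c$ and parents $p_1,\dots,p_k$ contributes the input--output links $p_i \minus c$, i.e.\ the skeleton edges of the DAG, together with the input--input links $p_i \minus p_j$, i.e.\ exactly the moral edges joining two parents of a common child. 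Applying $\Cut_{\mathcal{Z}}$ then deletes the vertices $\mathcal{Z}$ while keeping the boxes (hence the moral edges through them). Thus categorical $d$-separation says precisely that $\mathcal{Z}$ separates $\mathcal{X}$ from $\mathcal{Y}$ in the moralized ancestral graph, and it remains to match this with the path-based \Cref{def:classicalDseparation}, which I would do by two implications.

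For \emph{categorical $\Rightarrow$ classical} I prove the contrapositive: an active DAG path yields an undirected path in $\Cut_{\mathcal{Z}}(\varphi_{\mathcal{S}})$. One first checks that every vertex of an active path lies in $A$: endpoints lie in $\mathcal{S}$, colliders lie in $\An(\mathcal{Z}) \subseteq A$ by activeness, and any non-collider has an outgoing path edge which one follows until hitting an endpoint or a collider, exhibiting it as an ancestor of $\mathcal{S}$. The undirected path is then built by keeping every non-collider and endpoint (these avoid $\mathcal{Z}$, so they survive the cut) and \emph{bypassing} each collider $M$ (appearing as $u \rightarrow M \leftarrow v$) through the moral edge $u \minus v$ at the box producing $M$. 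This box is present because $M \in \An(\mathcal{Z}) \subseteq A$, and the bypass works even when $M \in \mathcal{Z}$, since $\Cut_{\mathcal{Z}}$ removes only the wire and not the box; moreover a neighbour of a collider is never itself a collider, so the bypass endpoints $u, v$ are non-colliders and survive the cut. This produces a genuine undirected path from $X$ to $Y$.

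For \emph{classical $\Rightarrow$ categorical}, again by contraposition, I convert an undirected path in $\Cut_{\mathcal{Z}}(\varphi_{\mathcal{S}})$ into an active DAG path: each step is either a DAG edge, left untouched, or a moral edge through a box with output $c$, which I expand into the collider $u \rightarrow c \leftarrow v$. All non-collider vertices lie in $A \setminus \mathcal{Z}$, so the non-collider condition is automatic, and \textbf{the one genuine obstacle is the collider condition}: each introduced collider $c$ (and each vertex where two DAG edges meet head-on) is known only to lie in $A = \An(\mathcal{S})$, whereas activeness demands membership in $\An(\mathcal{Z})$. I expect this to be the technical heart of the proof. The resolution is a rerouting argument: if such a collider $c$ fails $c \in \An(\mathcal{Z})$, then $c \in \An(\mathcal{X} \cup \mathcal{Y})$, so there is a directed path $c \twoheadrightarrow t$ with $t \in \mathcal{X} \cup \mathcal{Y}$ avoiding $\mathcal{Z}$ (otherwise $c$ would be an ancestor of $\mathcal{Z}$); splicing it in turns $c$ into a chain vertex, and since both ends of the walk reach $c$, one of the two resulting directed continuations connects $\mathcal{X}$ to $\mathcal{Y}$ along a strictly shorter walk. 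An induction on the length of the undirected path then closes the argument. Alternatively, once the dictionary of the first paragraph is in place, one may simply invoke the classical moralization criterion to conclude the equivalence.
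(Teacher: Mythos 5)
Your proof is correct in substance and reaches the same combinatorial core as the paper's, but it packages the argument differently. The paper never introduces the moral graph: it argues directly on paths. For the direction ``classically connected $\Rightarrow$ categorically connected'' it deletes from an active DAG path exactly those colliders lying in $\mathcal{Z}$ (turning them into input-to-input traversals of the corresponding box) and verifies wire survival with \Cref{lem:pureBloomDiscard} by propagating ancestry along the chain and fork segments; your version bypasses \emph{every} collider through its box and first checks that all vertices of an active path lie in $\An(\mathcal{X}\cup\mathcal{Y}\cup\mathcal{Z})$, which is a slightly more uniform way of saying the same thing. For the converse, the paper avoids your rerouting induction by putting the undirected path into the normal form $X \twoheadleftarrow A - B \twoheadrightarrow Y$ of Equation \eqref{no_bad_colliders} \emph{before} expanding input-to-input traversals into colliders; every collider then lies in $\An(\mathcal{X}\cup\mathcal{Y}\cup\mathcal{Z})$ but not in $\An(\mathcal{X})\cup\An(\mathcal{Y})$, hence in $\An(\mathcal{Z})$, with no induction needed. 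Your a posteriori repair works too, but the induction measure should be the number of colliders violating $c \in \An(\mathcal{Z})$ (or the length of the retained half of the original walk), not the length of the walk itself: splicing in the directed path $c \twoheadrightarrow t$ can lengthen it, so ``strictly shorter'' is not literally correct as stated. What your route buys is the explicit dictionary ``undirected connectivity of $\Cut_{\mathcal{Z}}(\varphi_{\mathcal{X},\mathcal{Y},\mathcal{Z}})$ equals separation in the moralized ancestral graph,'' after which one could indeed just cite the known equivalence of $d$-separation with the moralization criterion \cite{La90}; the paper's route is self-contained and stays entirely inside the string-diagrammatic formalism.
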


\begin{proof}
	To make the proof more intuitive, we introduce the term \emph{$d$-connected} as the negation of $d$-separated (in either version).

	We start by showing that classical $d$-connectedness implies categorical $d$-connectedness. Let $p$ be a DAG path between some $X \in \mathcal{X}$ and some $Y \in \mathcal{Y}$ which witnesses that $\mathcal{Z}$ makes $\mathcal{X}$ and $\mathcal{Y}$ be $d$-connected in the classical sense, which means that the following hold:
	\begin{enumerate}[label=(\roman*)]
		\item\label{connect_chain_fork} For every chain $W \rightarrow M \rightarrow U$ or fork $W \leftarrow M \rightarrow U$ in $p$, we have $M \not\in \mathcal{Z}$.
		\item\label{connect_collider} For every collider $W \rightarrow M \leftarrow U$ in $p$, we have $M \in \An(\mathcal{Z})$.
	\end{enumerate}
	For simplicity, we also assume without loss of generality that $p$ contains only one wire from $\mathcal{X}$ and $\mathcal{Y}$ each, say $X$ and $Y$ respectively.
	Then this $p$ can also be interpreted as an undirected path in $\varphi$, but generally not in $\varphi_{\textrm{cut}} \coloneqq \Cut_{\mathcal{Z}}(\varphi_{\mathcal{X}, \mathcal{Y}, \mathcal{Z}})$ since it may traverse wires that are not in $\varphi_{\textrm{cut}}$.
	However, we now show that there still is an undirected path $p'$ between $X$ and $Y$ in $\varphi_{\textrm{cut}}$.
	Since $p$ is $d$-connected, if $p$ contains a wire $Z \in \mathcal{Z}$, then it has to arise from a collider $U \rightarrow Z \leftarrow W$ in $p$.
	Removing wire $Z$ from $p$ still defines a valid \emph{undirected} path between $X$ and $Y$, pictorially:
	\begin{equation*} 
	\tikzfig{figures/dSepGadget1} \qquad \Longrightarrow \qquad \tikzfig{figures/dSepGadget1CUT}
	\end{equation*} 
	We prove that the path $p'$ obtained by removing all wires in $\mathcal{Z}$ from $p$ like this is an undirected path in $\varphi_{\textrm{cut}}$, which implies categorical $d$-connectedness. To this end, it only remains to show that each wire in $p'$ is an existing wire in $\varphi_{\textrm{cut}}$, which we do as follows:
\begin{enumerate}[label=(\roman*)]
	\item $X$ and $Y$ themselves are still in $\varphi_{\textrm{cut}}$.
	\item Every $Z \in \mathcal{Z}$ in $p$ is part of a collider $U \rightarrow Z \leftarrow W$ as above, so that $U, W \in \An(\mathcal{Z})$.
		This implies that $U$ and $W$ survive in $\varphi_{\mathcal{X},\mathcal{Y},\mathcal{Z}}$ by \Cref{lem:pureBloomDiscard}.
	\item Since $U$ and $W$ are themselves either the middle node in a chain or fork or the start or end of $p$, we have $U, W \not \in \mathcal{Z}$.
		This implies $U, W \in \An(\mathcal{Z}) \setminus \mathcal{Z}$, and therefore $U$ and $W$ survive also in $\varphi_{\textrm{cut}}$.
	\item For every chain $W \rightarrow M \rightarrow U$ in $p$, if $U$ survives in $\varphi_{\textrm{cut}}$, then so does $M$ (since it survives in $\varphi_{\mathcal{X},\mathcal{Y},\mathcal{Z}}$ and $M \not\in \mathcal{Z}$).
	\item For every fork $W \leftarrow M \rightarrow U$ in $p$, if $U$ or $W$ survives in $\varphi_{\textrm{cut}}$, then so does $M$ (since it survives in $\varphi_{\mathcal{X},\mathcal{Y},\mathcal{Z}}$ and $M \not\in \mathcal{Z}$).
\end{enumerate}
Since the wires in $p'$ are exactly those of $p$ minus some of the colliders, we can start with the first two observations and then apply the latter two repeatedly on any segment bounded by colliders or the starting node $X$ or the final node $Y$ in order to conclude that all wires in $p'$ are present in $\varphi_{\textrm{cut}}$.
This concludes one direction of the proof.

The converse direction of showing that categorical $d$-connectedness implies classical $d$-connectedness works similarly. Let $p$ be an undirected path between $X \in \mathcal{X}$ and $Y \in \mathcal{Y}$ in $\varphi_{\textrm{cut}}$. We assume without loss of generality that all wires in $p$ are distinct.
Furthermore, we also assume without loss of generality that $p$ is of the form, in terms of the notation from \Cref{defn:categoricalSep}\ref{doubleHead},
\begin{equation}
	\label{no_bad_colliders}
	X \twoheadleftarrow A - B \twoheadrightarrow Y, 
\end{equation}
where every wire that is in between $A$ and $B$ is not contained in $\An(\mathcal{X})$ or $\An(\mathcal{Y})$, or equivalently that every wire in $p$ that is also in $\An(\mathcal{X})$ is directly reached from $X$ by output-to-input traversals in $p$, and similarly for all wires in $\An(\mathcal{Y})$.
This property can be achieved by taking every wire in $p$ which is additionally in $\An(\mathcal{X})$ and replace it with the path from $X$ to it by a sequence of output-to-input traversals, and similarly for every wire in $\An(\mathcal{Y})$.
Note that this replacement may involve changing the starting and ending wires $X$ and $Y$ as well.

In order to turn $p$ into a DAG path $p'$ that witnesses classical $d$-separation, we need to remove all direct input-to-input traversals of a box in $p$; direct output-to-output traversals cannot occur due to the assumption that every box has exactly one output.
We can hence simply add to $p$ the unique output wire of every box that has an input-to-input traversal in $p$, and we obtain a valid DAG path $p'$.

It remains to verify the conditions on chains, forks and colliders. Clearly, $p'$ does not contain any chain $W \to Z \to U$ or fork $W \leftarrow Z \to U$ with $Z \in \mathcal{Z}$ since such a configuration cannot occur in $p$ to begin with. For a collider $W \to M \leftarrow U$, the unique box which outputs $M$ must be contained in $\varphi_{\mathcal{X},\mathcal{Y},\mathcal{Z}}$, and therefore be in $\An(\mathcal{X} \cup \mathcal{Y} \cup \mathcal{Z})$ by \Cref{lem:pureBloomDiscard}. However, $M$ being in $\An(\mathcal{X})$ or $\An(\mathcal{Y})$ violates the assumption that $p$ is of the form~\eqref{no_bad_colliders}. Therefore $M$ has to be in $\An(\mathcal{Z})$, showing the collider condition \ref{connect_collider}.
\end{proof}

We record one more observation on categorical $d$-separation for further use below.

\begin{lemma}
\label{lem:dSepInOutBox}
Let $\varphi$ be a pure bloom causal model and $\mathcal{X}, \mathcal{Y}, \mathcal{Z} \subseteq \Out(\varphi)$ a partition of all output wires such that $\mathcal{Z}$ categorically $d$-separates $\mathcal{X}$ and $\mathcal{Y}$. Then every box $b \in B(G)$ in $\varphi$ satisfies at least one of the cases:
\begin{enumerate}[label=(\roman*)]
	\item\label{sepauxX} $\In(b), \Out(b) \subseteq \mathcal{X} \cup \mathcal{Z}$.
	\item\label{sepauxY} $\In(b), \Out(b) \subseteq \mathcal{Y} \cup \mathcal{Z}$.
\end{enumerate}
\end{lemma}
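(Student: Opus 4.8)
The plan is to reduce the $d$-separation hypothesis to a purely combinatorial statement about the cut hypergraph, exploiting that $\mathcal{X}, \mathcal{Y}, \mathcal{Z}$ cover \emph{all} output wires. Since $\varphi$ is pure bloom we have $\Out(\varphi) = W(\varphi)$ by \Cref{not:CausalModel}, and because $\mathcal{X} \cup \mathcal{Y} \cup \mathcal{Z} = \Out(\varphi)$ the complement $(\mathcal{X} \cup \mathcal{Y} \cup \mathcal{Z})^c$ is empty. Hence the marginal $\varphi_{\mathcal{X},\mathcal{Y},\mathcal{Z}} = \norm(\Del{(\mathcal{X} \cup \mathcal{Y} \cup \mathcal{Z})^c} \circ \varphi)$ discards nothing, and as $\varphi$ is already a normalized causal model this yields $\varphi_{\mathcal{X},\mathcal{Y},\mathcal{Z}} = \varphi$. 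The $d$-separation hypothesis then simply asserts that $\Cut_{\mathcal{Z}}(\varphi)$ has no undirected path between any wire of $\mathcal{X}$ and any wire of $\mathcal{Y}$.

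Next I would unpack $\Cut_{\mathcal{Z}}(\varphi)$: its underlying hypergraph $H$ has $B(H) = B(G)$ and $W(H) = W(G) \setminus \mathcal{Z} = \mathcal{X} \cup \mathcal{Y}$, with every box retaining precisely its incident wires outside $\mathcal{Z}$. The key elementary observation is that any two wires incident to a common box $b$ are joined by an undirected path of length one through $b$, directly from \Cref{defn:categoricalSep}\ref{path_def}; moreover, for wires outside $\mathcal{Z}$ the incidence counts in $H$ agree with those in $G$, so this remains valid in $H$. Consequently, all non-$\mathcal{Z}$ wires incident to a fixed box lie in a single connected component of $H$.

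Then I fix an arbitrary box $b \in B(G)$ and look at its incident wires outside $\mathcal{Z}$, namely $(\In(b) \cup \Out(b)) \setminus \mathcal{Z} \subseteq \mathcal{X} \cup \mathcal{Y}$. By the previous step these are mutually connected in $H$, so they cannot contain both an $\mathcal{X}$-wire and a $\mathcal{Y}$-wire without producing a forbidden $\mathcal{X}$-to-$\mathcal{Y}$ undirected path. Hence they lie entirely in $\mathcal{X}$ or entirely in $\mathcal{Y}$. In the first case $\In(b), \Out(b) \subseteq \mathcal{X} \cup \mathcal{Z}$, which is \ref{sepauxX}; in the second, $\In(b), \Out(b) \subseteq \mathcal{Y} \cup \mathcal{Z}$, which is \ref{sepauxY}. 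The degenerate case where every wire incident to $b$ already lies in $\mathcal{Z}$ satisfies both inclusions vacuously.

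I do not expect a genuine obstacle here, as the whole argument becomes combinatorial once the reduction is in place. The only points demanding care are the bookkeeping showing that marginalizing over an empty complement returns $\varphi$ unchanged, and the correct application of the length-one undirected path to conclude that the uncut wires of each box never straddle the $\mathcal{X}/\mathcal{Y}$ partition; the boundary case of a box all of whose wires are cut should be flagged explicitly.
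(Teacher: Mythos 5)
Your proposal is correct and follows essentially the same route as the paper: reduce to $\Cut_{\mathcal{Z}}(\varphi)$ (noting that marginalizing over the empty complement changes nothing) and observe that two non-$\mathcal{Z}$ wires incident to a common box form an undirected path of length one, so they cannot straddle $\mathcal{X}$ and $\mathcal{Y}$. If anything, your uniform treatment of all incident wires of $b$ (rather than the paper's handling of $\Out(b)$ first and then $\In(b)$ ``similarly'') is slightly cleaner, and your explicit flagging of the degenerate case $\In(b) \cup \Out(b) \subseteq \mathcal{Z}$ is a welcome bit of care.
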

\begin{proof}
	If there exist $Y \in \mathcal{Y} \cap \Out(b)$ and $X \in \mathcal{X} \cap \Out(b)$, then these wires are still in the output of $b$ in $\varphi_{\textrm{cut}}$, and this contradicts the assumed disconnectedness of $\varphi_{\textrm{cut}}$ with respect to $\mathcal{X}$ and $\mathcal{Y}$. Since $\mathcal{X}, \mathcal{Y}, \mathcal{Z}$ form a partition, this shows that either $\Out(b) \subseteq \mathcal{X} \cup \mathcal{Z}$ or $\Out(b) \subseteq \mathcal{Y} \cup \mathcal{Z}$.

Since $\varphi$ is a pure bloom we have that $\In(b) \subseteq \Out(\varphi)$ which guarantees that $\In(b) \subseteq \mathcal{X} \cup \mathcal{Y} \cup \mathcal{Z}$. Proving $\Out(b) \subseteq \mathcal{X} \cup \mathcal{Z} \Rightarrow \In(b) \subseteq \mathcal{X} \cup \mathcal{Z}$ and $\Out(b) \subseteq \mathcal{Y} \cup \mathcal{Z} \Rightarrow \In(b) \subseteq \mathcal{Y} \cup \mathcal{Z}$ works similarly to the first part of the proof, and this then proves the statement.
\end{proof}

Pictorially, \Cref{lem:dSepInOutBox} shows that if $\mathcal{Z}$ $d$-separates $\mathcal{X}$ and $\mathcal{Y}$, then every box $b$ in $\varphi$ is of the form
\begin{equation*}
\tikzfig{figures/boxXZ} \qquad \text{ or } \qquad \tikzfig{figures/boxYZ} 
\end{equation*} 
where $\mathcal{X}_i \subseteq \mathcal{X}$, $\mathcal{Y}_i \subseteq \mathcal{Y}$, $\mathcal{Z}_i \subseteq \mathcal{Z}$.

\subsection{Causal compatibility}
\label{ssec:causalCompatibility}

In the following, we show that $d$-separation implies conditional independence for any generalized causal model. We first prove this result for a partition of wires in a pure bloom causal model in \Cref{lem:dSepCondIndep}. We then refine it to any disjoint collection of wires in \Cref{cor:dseparationCondIndependence} in any generalized causal model. Finally, we show in \Cref{thm:causalCompdSep} that $d$-separation fully characterizes causal compatibility for pure bloom causal models in all Markov categories with conditionals.

Throughout, we also use the following convenient notation: If a morphism $f$ in $\cat{C}$ is compatible with a causal model $\varphi$ in the sense of \Cref{def:compatibility}, then we refer to the wires of $\varphi$ to indicate conditional independence instead of the objects in the tensor factorization of $f$. In other words, instead of writing $\mathcal{X}' \perp \mathcal{Y}' \ | \ \mathcal{Z}'$, we simply write $\mathcal{X} \perp \mathcal{Y} \ | \ \mathcal{Z}$. Here, each $W' = F(\mathsf{type}(W))$ is the object in $\cat{C}$ associated with the wire $W$ by the causal model functor $F$ (see \Cref{def:compatibility}).

\begin{lemma} \label{lem:dSepCondIndep}
Let $\cat{C}$ be a strict Markov category with conditionals, and let $\varphi$ be a pure bloom causal model. Further, let $\mathcal{X}, \mathcal{Y}, \mathcal{Z} \subseteq \Out(\varphi)$ be a partition of wires in $\varphi$ such that $\In(\varphi) \subseteq \mathcal{Y} \cup \mathcal{Z}$ and $\mathcal{X}$ and $\mathcal{Y}$ are $d$-separated by $\mathcal{Z}$.
	
If a morphism $f$ in $\cat{C}$ is compatible with $\varphi$, then $\mathcal{X} \perp \mathcal{Y} \ | \ \mathcal{Z}$ (as in \Cref{def:condInd_diff}).
\end{lemma}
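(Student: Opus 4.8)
The plan is to induct on the number of boxes of $\varphi$, peeling off one causally final box at a time, with the box classification of \Cref{lem:dSepInOutBox} as the organizing principle. Since $\varphi$ is pure bloom, that lemma partitions the boxes into an \emph{$\mathcal{X}$-side} (those $b$ with $\In(b),\Out(b)\subseteq\mathcal{X}\cup\mathcal{Z}$) and a \emph{$\mathcal{Y}$-side} (those with $\In(b),\Out(b)\subseteq\mathcal{Y}\cup\mathcal{Z}$), the two sides sharing only wires in $\mathcal{Z}$. For the base case, if $\varphi$ has no boxes then every wire is a global input, so $\mathcal{X}\subseteq\In(\varphi)\subseteq\mathcal{Y}\cup\mathcal{Z}$ forces $\mathcal{X}=\emptyset$, and $\emptyset\perp\mathcal{Y}\mid\mathcal{Z}$ holds by \Cref{rem:condIndepEmptyset} (a first use of conditionals).

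For the inductive step I would pick a final box $b$, i.e.\ one all of whose outputs $\mathcal{O}_b$ are global outputs that are not further copied or consumed; such a box exists because $\varphi$ is pure bloom and normalized. Deleting $b$ together with its leaf wires $\mathcal{O}_b$ produces a smaller pure bloom causal model $\varphi'$: removing leaves preserves acyclicity, left monogamy, and normalization, keeps $\In(\varphi')=\In(\varphi)\subseteq\mathcal{Y}\cup\mathcal{Z}$ since inputs are source wires, and cannot destroy the $d$-separation of $\mathcal{X}$ and $\mathcal{Y}$ by $\mathcal{Z}$ because deleting wires and a box only decreases connectedness and a leaf is never an interior vertex of an undirected path. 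Applying the induction hypothesis to the corresponding marginal $f'$ of $f$ yields the conditional independence for the partition $(\mathcal{X}\setminus\mathcal{O}_b,\,\mathcal{Y}\setminus\mathcal{O}_b,\,\mathcal{Z}\setminus\mathcal{O}_b)$, and it remains to reattach the mechanism $F(b)\colon\In(b)\to\mathcal{O}_b$ and push the independence back up.

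The reattachment is controlled by a closure property that I would isolate as the technical core: post-composing a morphism displaying $\mathcal{X}\perp\mathcal{Y}\mid\mathcal{Z}$ with a kernel whose inputs and outputs all lie on one side of the partition, sorting its new outputs into the classes accordingly, preserves the independence. Two of the three regimes are purely diagrammatic and need no conditionals. If $b$ is on the $\mathcal{Y}$-side, a new $\mathcal{Y}$- or $\mathcal{Z}$-output produced from $\mathcal{Y}\cup\mathcal{Z}$ is folded into the $\mathcal{Y}\mathcal{Z}$-generating leg $h$ of \Cref{def:condInd_diff}, while the $\mathcal{X}$-generating leg $g$ simply ignores it and still generates $\mathcal{X}$ from the subset $\mathcal{Z}\setminus\mathcal{O}_b$ of the enlarged conditioning set. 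If $b$ is on the $\mathcal{X}$-side but emits only $\mathcal{X}$-wires, the new output is obtained by post-composing $g$ with $F(b)$, which remains a kernel of $\mathcal{Z}$ alone; here the hypothesis $\In(\varphi)\subseteq\mathcal{Y}\cup\mathcal{Z}$ guarantees $F(b)$ never reads a global input lying in $\mathcal{X}$.

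The remaining, and genuinely hard, case is a final $\mathcal{X}$-side box with $\mathcal{O}_b\cap\mathcal{Z}\ne\emptyset$: a new conditioning wire $Z_0$ is produced as a function of $\mathcal{X}\cup\mathcal{Z}$, so enlarging the conditioning set by $Z_0$ forces one to \emph{reverse} the dependence $Z_0\leftarrow\mathcal{X}$ into a fresh kernel $\mathcal{X}\leftarrow(\mathcal{Z}\setminus\mathcal{O}_b,\,Z_0)$. This is exactly where the existence of conditionals is indispensable: I would take the new leg $g$ to be the conditional of $\mathcal{X}$ given the enlarged $\mathcal{Z}$, and verify the defining equation of \Cref{def:condInd_diff} using the inductive independence together with the fact that $Z_0$ depends on neither $\mathcal{Y}$ nor the $\mathcal{Y}$-part of the inputs (because $b$ is $\mathcal{X}$-side and $\In(\varphi)\subseteq\mathcal{Y}\cup\mathcal{Z}$), so that $f$ indeed splits as this $g$ applied to a copy of $\mathcal{Z}$ alongside the $\mathcal{Y}\mathcal{Z}$-marginal $h$. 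I expect the bookkeeping of this conditional-reversal step—certifying that the reconstructed kernel is independent of $\mathcal{Y}$ and of the inputs and that the resulting diagram equals $f$—to be the main obstacle, whereas the box classification, the leaf deletion, and the two conditional-free reattachments are routine.
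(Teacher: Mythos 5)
Your proposal is correct, and it rests on the same two pillars as the paper's argument---the box dichotomy of \Cref{lem:dSepInOutBox} and an application of conditionals to re-split the kernel that jointly generates the $\mathcal{X}$- and $\mathcal{Z}$-wires---but it runs the induction in the opposite direction. The paper fixes a topological order of the boxes starting from $\In(\varphi)$ and builds the model up, maintaining as its invariant the strengthened partial decomposition \eqref{eq:goalDecompose}, which explicitly carries along the not-yet-processed remainder $\psi_i$ of the causal model. You instead peel off a final box, invoke the unstrengthened lemma for the marginal of $f$ on the smaller pure bloom model, and then reattach $F(b)$; the price is your three-way reattachment analysis, in particular the split on whether the final $\mathcal{X}$-side box emits wires of $\mathcal{Z}$. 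Your treatment of that hard case is the right one: since $\In(b)\subseteq \mathcal{X}'\cup\mathcal{Z}'$ and the inductive decomposition generates all of $\mathcal{X}'$ from a copy of $\mathcal{Z}'$, the whole of $\mathcal{X}\cup(\Out(b)\cap\mathcal{Z})$ is produced by a single kernel out of $\mathcal{Z}'$, and conditioning that kernel on its $\Out(b)\cap\mathcal{Z}$ output yields the new legs $g$ and $h$ of \Cref{def:condInd_diff}; this is the same use of conditionals as in case (i) of the paper's induction step, just packaged per final box rather than per box added, and your finer split has the merit of showing that conditionals are only genuinely needed for $\mathcal{X}$-side boxes emitting $\mathcal{Z}$-wires. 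Two small corrections: in your base case $\mathcal{X}=\emptyset$, and $\emptyset\perp\mathcal{Y}\mid\mathcal{Z}$ is trivially witnessed by \Cref{def:condInd_diff} with an empty $\mathcal{X}$-generating leg, whereas \Cref{rem:condIndepEmptyset} concerns the asymmetric case $\mathcal{X}\perp\emptyset\mid\mathcal{Z}$ and does need conditionals---so no conditionals enter your base case; and when arguing that deleting the final box preserves $d$-separation, it is cleanest to note that pure bloom makes $\varphi_{\mathcal{X},\mathcal{Y},\mathcal{Z}}=\varphi$, so that $\Cut_{\mathcal{Z}\setminus\Out(b)}(\varphi')$ is literally a subdiagram of $\Cut_{\mathcal{Z}}(\varphi)$ and no new undirected paths can appear.
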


\begin{proof}
	Choose a total ordering of all boxes $b_1, \ldots, b_{k-1} \in B(G)$ and a chain of sets of wires in $\Out(\varphi)$,
	\begin{equation*} 
	\In(\varphi) = \mathcal{W}_1 \subseteq \ldots \subseteq \mathcal{W}_k = \Out(\varphi),
	\end{equation*}
	such that $\mathcal{W}_{i+1} = \Out(b_i) \cup \mathcal{W}_i$ and $\An(\mathcal{W}_i) = \mathcal{W}_i$. Note that there is a factorization
\begin{equation*}
\varphi \quad = \quad \tikzfig{figures/morphismFactorization}
\end{equation*} 
in $\cat{FreeMarkov}_\Sigma$, where $\eta_i$ is again a pure bloom and $\eta_1$ is an identity morphism.
	The existence of such a chain of sets follows easily by induction on the number of boxes based on the existence of a final box \cite[Lemma 4.6]{Fr22}.\footnote{It is also worth noting that for causal models which correspond to DAGs, this statement amounts to the standard fact that every DAG can be refined to a total order.}

	Then for every $i \in \{1, \ldots, k\}$, we show the existence of a decomposition\footnote{Note that we assume, without loss of generality, that every box in the string diagram depends on every variable in the collection of wires. For example, $k_i$ depends on every variable in $\mathcal{Z} \cap \mathcal{W}_i$. If this is not the case, say, $k_i$ does not depend on $W \in \mathcal{Z} \cap \mathcal{W}_i$, we replace it by $k_i' = k_i \circ (\mathrm{id} \otimes \Del{W})$.}
\begin{equation}
	\label{eq:goalDecompose}
	f \quad = \quad \tikzfig{figures/dSepCriterionProof1}
\end{equation} 
Since $\mathcal{W}_k^{c} = \emptyset$, setting $i = k$ proves the desired statement.

	We prove this stronger claim by induction on $i$. The start of the induction at $i = 1$ is trivial since $\eta_1$ is the identity and therefore
	\begin{equation*}	
	f \quad = \quad \tikzfig{figures/dSepCriterionProof1a}
	\end{equation*}
	since $\mathcal{X} \cap \mathcal{W}_1 = \mathcal{X} \cap \In(\varphi) = \emptyset$. 	For the induction step, we prove the statement at $i+1$. Since $\varphi$ is pure bloom and since $\An(\mathcal{W}_{i+1}) = \mathcal{W}_{i+1}$, we can peel off the box $b_i$ with $\mathcal{W}_{i+1} = \Out(b_i) \cup \mathcal{W}_i$ from $\psi_i$, so as to achieve the decomposition
\begin{equation} \label{eq:dSepInductionStep}
f \quad = \quad \tikzfig{figures/dSepCriterionProof2}
\end{equation} 
where we have used the induction assumption to obtain a decomposition as in the lower half, and the dashed wires indicate that only some of them may be present since the inputs of $b_i$ are an unspecified subset of $\mathcal{W}_i$.

	By \Cref{lem:dSepInOutBox} we have to distinguish two cases:
\begin{enumerate}[label=(\roman*)]
	\item $\In(b_i),\Out(b_i) \subseteq \mathcal{X} \cup \mathcal{Z}$. Then, the third dashed wire in the above decomposition of $f$ is not needed, and we consider the morphism
	\begin{equation*}
	g \quad \coloneqq \quad \tikzfig{figures/dSepCriterionProof3}
	\end{equation*} 
	which is part of that decomposition.
	By the existence of conditionals, we can rewrite $g$ in the form
	\begin{equation*}
	g \quad = \quad \tikzfig{figures/dSepCriterionProof4}	\quad = \quad \tikzfig{figures/dSepCriterionProof5}
	\end{equation*} 
	where both lower boxes can be refined with internal structure consisting of carrying $(\mathcal{Z} \cap \mathcal{W}_i)'$ forward on a separate wire, but this internal structure is not relevant for the remainder of the proof.
	Substituting this form of $g$ into Equation \eqref{eq:dSepInductionStep}, i.e.\ replacing the morphism $k_i$ there with the box $r_i$ here and merging the box $\ell_i$ here with $h_i$ there, proves the induction step.
	\item $\In(b_i),\Out(b_i) \subseteq \mathcal{Y} \cup \mathcal{Z}$. Then, the first dashed wire in the above decomposition of $f$ is not needed, and we can merge $F(b_i)$ with $h_i$, which shows the statement.
\end{enumerate}
\end{proof}

We will now generalize \Cref{lem:dSepCondIndep} to all generalized causal models and to arbitrary disjoint sets $\mathcal{X}$, $\mathcal{Y}$ and $\mathcal{Z}$ which do not necessarily partition the set of all wires.

\begin{lemma}
\label{lem:extensiontripartition}
Let $\varphi$ be a generalized causal model and $\mathcal{X}$, $\mathcal{Y}$, $\mathcal{Z} \subseteq \Out(\varphi)$ a tripartition of output wires in $\varphi$ such that $\In(\varphi) \subseteq \mathcal{Y} \cup \mathcal{Z}$ and such that $\mathcal{Z}$ categorically $d$-separates $\mathcal{X}$ and $\mathcal{Y}$. Then there exists a tripartition of wires $\widetilde{\mathcal{X}} \supseteq \mathcal{X}$, $\widetilde{\mathcal{Y}} \supseteq \mathcal{Y}$, $\mathcal{Z}$ in the pure bloom version $\varphi_{\textrm{pure-bloom}}$ of $\varphi$\footnote{The pure-bloom version $\varphi_{\textrm{pure-bloom}}$ is obtained by copying each wire so to make it into an output. It is part of the bloom-circuitry factorization of \cite{Fr22}.} such that
\begin{equation*}
\mathcal{Z} \text{ $d$-separates } \widetilde{\mathcal{X}} \text{ and } \widetilde{\mathcal{Y}} \qquad \text{ in } \varphi_{\textrm{pure-bloom}}
\end{equation*}
\end{lemma}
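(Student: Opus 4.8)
The plan is to exploit the fact that, because $\mathcal{X},\mathcal{Y},\mathcal{Z}$ already exhaust $\Out(\varphi)$, both marginalizations that occur in the definition of categorical $d$-separation are vacuous, and then to read off the desired extension directly from the connected components of the cut diagram.

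First I would record the reduction to a bare connectivity statement. Since $\mathcal{X}\cup\mathcal{Y}\cup\mathcal{Z}=\Out(\varphi)$, no output wire is discarded in forming the marginal, so $\varphi_{\mathcal{X},\mathcal{Y},\mathcal{Z}}=\norm(\varphi)=\varphi$. Thus the hypothesis that $\mathcal{Z}$ categorically $d$-separates $\mathcal{X}$ and $\mathcal{Y}$ in $\varphi$ says exactly that $\Cut_{\mathcal{Z}}(\varphi)$ contains no undirected path between any wire of $\mathcal{X}$ and any wire of $\mathcal{Y}$.

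Second, I would compare $\varphi$ with its pure-bloom version. Passing from $\varphi$ to $\varphi_{\textrm{pure-bloom}}$ only exposes the hitherto latent wires as additional global outputs (by copying); it leaves the wire set $W(G)$, the box set $B(G)$, and all input/output incidences unchanged, and copy dots do not interrupt a wire. Consequently $\Cut_{\mathcal{Z}}(\varphi_{\textrm{pure-bloom}})$ and $\Cut_{\mathcal{Z}}(\varphi)$ are literally the same hypergraph once one forgets which wires are marked as global outputs, so they have identical undirected connectivity. Moreover, since every wire of $\varphi_{\textrm{pure-bloom}}$ is now an output, any tripartition $\widetilde{\mathcal{X}},\widetilde{\mathcal{Y}},\mathcal{Z}$ of $W(G)$ again makes the defining marginal vacuous; hence $\mathcal{Z}$ $d$-separates $\widetilde{\mathcal{X}}$ and $\widetilde{\mathcal{Y}}$ in $\varphi_{\textrm{pure-bloom}}$ if and only if $\Cut_{\mathcal{Z}}(\varphi_{\textrm{pure-bloom}})$ has no undirected $\widetilde{\mathcal{X}}$-$\widetilde{\mathcal{Y}}$ path. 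Combining this with the first step, $\Cut_{\mathcal{Z}}(\varphi_{\textrm{pure-bloom}})$ has no undirected $\mathcal{X}$-$\mathcal{Y}$ path.

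Finally I would build the extension from connected components. The vertex set of $\Cut_{\mathcal{Z}}(\varphi_{\textrm{pure-bloom}})$ is $W(G)\setminus\mathcal{Z}$; I split it into its undirected connected components. By the previous step no component meets both $\mathcal{X}$ and $\mathcal{Y}$, so I may set $\widetilde{\mathcal{Y}}$ to be the union of all components meeting $\mathcal{Y}$ and $\widetilde{\mathcal{X}}$ to be the union of all remaining components (those meeting $\mathcal{X}$ and those meeting neither). This is well-defined, yields a partition of $W(G)\setminus\mathcal{Z}$ with $\widetilde{\mathcal{X}}\supseteq\mathcal{X}$ and $\widetilde{\mathcal{Y}}\supseteq\mathcal{Y}$, and---being unions of whole components---admits no undirected path from $\widetilde{\mathcal{X}}$ to $\widetilde{\mathcal{Y}}$; by the equivalence in the second step this is precisely the assertion that $\mathcal{Z}$ $d$-separates $\widetilde{\mathcal{X}}$ and $\widetilde{\mathcal{Y}}$ in $\varphi_{\textrm{pure-bloom}}$. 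The main thing to get right is the second step: one must verify that pure-bloom-ification is purely a matter of re-marking outputs---adding copies that, by the wire convention, create no new wires and sever no connectivity---and that both $d$-separation statements carry vacuous marginalizations because the relevant label sets already cover all outputs. Once this is pinned down, the component argument is routine.
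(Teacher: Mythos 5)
Your proposal is correct and follows essentially the same route as the paper: reduce categorical $d$-separation to bare connectivity of the cut diagram (the marginalizations being vacuous for exhaustive tripartitions), observe that pure-bloom-ification changes only the output marking and hence preserves undirected connectivity, and then define the extension via connected components. The only (immaterial) difference is that you place the components meeting neither $\mathcal{X}$ nor $\mathcal{Y}$ into $\widetilde{\mathcal{X}}$, whereas the paper takes $\widetilde{\mathcal{X}}$ to be exactly the union of components meeting $\mathcal{X}$ and puts everything else into $\widetilde{\mathcal{Y}}$.
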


\begin{proof}
With $\varphi_{\textrm{cut}} \coloneqq \Cut_{\mathcal{Z}}(\varphi_{\textrm{pure-bloom}})$, define
\begin{equation*}
\widetilde{\mathcal{X}} \coloneqq \{U \in \Out(\varphi_{\textrm{cut}}): \exists X \in \mathcal{X}: X - U \textrm{ in } \varphi_{\textrm{cut}} \} \supseteq \mathcal{X}
\end{equation*} 
to be the connected component of $\mathcal{X}$ in $\varphi_{\textrm{cut}}$, and
\begin{equation*}
\widetilde{\mathcal{Y}} \coloneqq \Out(\varphi_{\textrm{pure-bloom}}) \setminus \left(\widetilde{\mathcal{X}} \cup \mathcal{Z}\right) \supseteq \mathcal{Y}. 
\end{equation*}
By definition, $\widetilde{\mathcal{X}}, \widetilde{\mathcal{Y}}, \mathcal{Z}$ form a tripartition of wires in $\varphi_{\textrm{pure-bloom}}$. Moreover,
$\widetilde{\mathcal{X}}$ and $\widetilde{\mathcal{Y}}$ are categorically $d$-separated by $\mathcal{Z}$ since any path in $\Cut_{\mathcal{Z}}(\varphi_{\textrm{pure-bloom}})$ is a valid path in $\Cut_{\mathcal{Z}}(\varphi)$ and vice versa.
\end{proof}

\begin{corollary}
\label{cor:dseparationCondIndependence}
	Let $\cat{C}$ be a strict Markov category with conditionals, and let $\varphi$ be a generalized causal model.\footnote{In this situation $\varphi$ does not need to be pure bloom.} Further, let $\mathcal{X}, \mathcal{Y}, \mathcal{Z} \subseteq \Out(\varphi)$ be disjoint sets of output wires in $\varphi$ such that $\In(\varphi) \subseteq \mathcal{Y} \cup \mathcal{Z}$ and $\mathcal{X}$ and $\mathcal{Y}$ are $d$-separated by $\mathcal{Z}$. If $f$ is compatible with $\varphi$, then $\mathcal{X} \perp \mathcal{Y} \ | \ \mathcal{Z}$.
\end{corollary}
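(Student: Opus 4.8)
The plan is to deduce the corollary from the two preceding lemmas by first reducing to the partitioned, pure-bloom situation in which \Cref{lem:dSepCondIndep} applies, and then transporting the resulting conditional independence back along two marginalizations. Write $\mathcal{R} \coloneqq \Out(\varphi) \setminus (\mathcal{X} \cup \mathcal{Y} \cup \mathcal{Z})$ for the output wires not mentioned in the $d$-separation. First I would marginalize $\mathcal{R}$ away: replace $\varphi$ by its marginal $\tilde{\varphi} \coloneqq \varphi_{\mathcal{X} \cup \mathcal{Y} \cup \mathcal{Z}}$ and $f$ by $\Del{\mathcal{R}'} \circ f$. Since $F$ is a Markov functor, $\Del{\mathcal{R}'} \circ f = \Del{\mathcal{R}'} \circ F(\varphi) = F(\Del{\mathcal{R}} \circ \varphi) = F(\tilde{\varphi})$, so the new morphism is compatible with $\tilde{\varphi}$; the inputs are untouched, so $\In(\tilde{\varphi}) = \In(\varphi) \subseteq \mathcal{Y} \cup \mathcal{Z}$; and because $\Out(\tilde{\varphi}) = \mathcal{X} \cup \mathcal{Y} \cup \mathcal{Z}$, the marginal $(\tilde{\varphi})_{\mathcal{X},\mathcal{Y},\mathcal{Z}}$ equals $\varphi_{\mathcal{X},\mathcal{Y},\mathcal{Z}}$, so the $d$-separation hypothesis carries over verbatim. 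As $\mathcal{X} \perp \mathcal{Y} \mid \mathcal{Z}$ depends only on the marginal of $f$ on $\mathcal{X} \cup \mathcal{Y} \cup \mathcal{Z}$, it suffices to prove the statement for $\tilde{\varphi}$, and hence I may assume without loss of generality that $\mathcal{X}, \mathcal{Y}, \mathcal{Z}$ partition $\Out(\varphi)$.

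Next, since $\varphi$ need not be pure bloom, I would apply \Cref{lem:extensiontripartition} to obtain the pure-bloom version $\varphi_{\textrm{pure-bloom}}$ together with a tripartition $\widetilde{\mathcal{X}} \supseteq \mathcal{X}$, $\widetilde{\mathcal{Y}} \supseteq \mathcal{Y}$, $\mathcal{Z}$ of its wires for which $\mathcal{Z}$ still $d$-separates $\widetilde{\mathcal{X}}$ and $\widetilde{\mathcal{Y}}$. The same functor $F$ exhibits $f_{\textrm{bloom}} \coloneqq F(\varphi_{\textrm{pure-bloom}})$ as compatible with $\varphi_{\textrm{pure-bloom}}$, and blooming leaves the inputs unchanged, so $\In(\varphi_{\textrm{pure-bloom}}) = \In(\varphi) \subseteq \mathcal{Y} \cup \mathcal{Z} \subseteq \widetilde{\mathcal{Y}} \cup \mathcal{Z}$. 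All hypotheses of \Cref{lem:dSepCondIndep} are then met for $\varphi_{\textrm{pure-bloom}}$ with the partition $\widetilde{\mathcal{X}}, \widetilde{\mathcal{Y}}, \mathcal{Z}$, so that lemma yields $\widetilde{\mathcal{X}} \perp \widetilde{\mathcal{Y}} \mid \mathcal{Z}$ for $f_{\textrm{bloom}}$.

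Finally I would push this conclusion back down. Marginalizing $f_{\textrm{bloom}}$ onto $\Out(\varphi)$ recovers $f$, and since $\mathcal{X} \subseteq \widetilde{\mathcal{X}}$ and $\mathcal{Y} \subseteq \widetilde{\mathcal{Y}}$ while $\mathcal{Z}$ is unchanged, this marginalization only discards outputs lying inside $\widetilde{\mathcal{X}}$ and inside $\widetilde{\mathcal{Y}}$. Post-composing the corresponding boxes of the factorization from \Cref{def:condInd_diff} with discard maps therefore preserves the factored form, giving $\mathcal{X} \perp \mathcal{Y} \mid \mathcal{Z}$ for $f$; this is just the decomposition (stability under marginalization) property of this notion of conditional independence. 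The conceptual content is carried entirely by \Cref{lem:dSepCondIndep} and \Cref{lem:extensiontripartition}, so the only real work is bookkeeping: verifying that marginalizing $\mathcal{R}$ and passing to the pure-bloom version preserve compatibility, the inclusion $\In(\varphi) \subseteq \mathcal{Y} \cup \mathcal{Z}$, and the $d$-separation hypothesis. I expect this final descent step to be the only place where one must argue (briefly) rather than merely invoke a previous result, though it remains routine.
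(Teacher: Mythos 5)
Your proposal is correct and follows essentially the same route as the paper: marginalize onto $\mathcal{X}\cup\mathcal{Y}\cup\mathcal{Z}$ (the paper works directly with $\psi\coloneqq\varphi_{\mathcal{X},\mathcal{Y},\mathcal{Z}}$ and $g\coloneqq F(\psi)$), pass to the pure-bloom version and the enlarged tripartition of \Cref{lem:extensiontripartition}, apply \Cref{lem:dSepCondIndep}, and then marginalize over $\widetilde{\mathcal{X}}\setminus\mathcal{X}$ and $\widetilde{\mathcal{Y}}\setminus\mathcal{Y}$ to descend to $f_{\mathcal{X}',\mathcal{Y}',\mathcal{Z}'}$. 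The bookkeeping you flag (compatibility, the input inclusion, and preservation of $d$-separation under these reductions) is exactly what the paper's proof also relies on.
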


In this statement, we use another standard convention: when the disjoint sets $\mathcal{X},\mathcal{Y},\mathcal{Z}$ do not partition the set of wires of $\varphi$, then the conditional independence $\mathcal{X} \perp \mathcal{Y} \ | \ \mathcal{Z}$ is to be understood as \Cref{def:condInd_diff} applied to the corresponding marginal 
$$f_{\mathcal{X}',\mathcal{Y}',\mathcal{Z}'} = \Del{(\mathcal{X}' \cup \mathcal{Y}' \cup \mathcal{Z}')^{c}} \circ f$$
rather than to $f$ itself.

\begin{proof}
We prove this statement by reducing it to the case of pure bloom causal models treated in \Cref{lem:dSepCondIndep}.

Consider the restricted causal model $\psi \coloneqq \varphi_{\mathcal{X}, \mathcal{Y}, \mathcal{Z}}$. We have that $f_{\mathcal{X}',\mathcal{Y}',\mathcal{Z}'} = F(\psi)$ since $F$ is a Markov functor. By the definition of categorical $d$-separation, $\mathcal{Z}$ $d$-separates $\mathcal{X}$ and $\mathcal{Y}$ also in $\psi$. Let $\psi_{\textrm{pure-bloom}}$ be the pure bloom version of $\psi$. Since $f_{\mathcal{X}',\mathcal{Y}',\mathcal{Z}'}$ is compatible with $\psi$, we can extend it to a pure bloom version
\begin{equation*}
g \coloneqq F(\psi_{\textrm{pure-bloom}})
\end{equation*}
of which $f_{\mathcal{X}',\mathcal{Y}',\mathcal{Z}'}$ is a marginal.

By \Cref{lem:extensiontripartition}, for $\psi_{\textrm{pure-bloom}}$ there is a tripartition of output wires $\widetilde{\mathcal{X}} \supseteq \mathcal{X}, \widetilde{\mathcal{Y}} \supseteq \mathcal{Y}$, $\mathcal{Z}$ such that $\mathcal{Z}$ $d$-separates $\widetilde{\mathcal{X}}$ and $\widetilde{\mathcal{Y}}$. Since $\cat{C}$ has conditionals, \Cref{lem:dSepCondIndep} provides us with a decomposition of the form
\begin{equation*} 
g \quad = \quad \tikzfig{figures/ConditionalIndependenceMorphismTilde}
\end{equation*} 
By marginalizing over $\widetilde{\mathcal{X}}' \setminus \mathcal{X}'$, $\widetilde{\mathcal{Y}}' \setminus \mathcal{Y}'$ in $g$, we obtain the desired conditional independence for the marginal $f_{\mathcal{X}', \mathcal{Y}', \mathcal{Z}'}$.
\end{proof}

Note that this result includes the soundness of the classical $d$-separation criterion in the classical case of discrete random variables in Bayesian networks\footnote{See \cite{Ve90} for the original proof and \cite[Theorem 1.2.5(i)]{Pe09} for a textbook account.}, which is obtained upon restricting to pure bloom causal models with $\In(\varphi) = \emptyset$, the Markov category $\cat{FinStoch}$, and every box having precisely one output since then conditional independence reduces to \Cref{def:CondIndependence} by \Cref{rem:condInd}.

\begin{definition}\label{def:MarkovProperties}
	Let $\varphi$ be a generalized causal model and $f$ a morphism in a strict Markov category $\cat{C}$ (not necessarily having conditionals). Then we say that $f$ satisfies:
\begin{enumerate}[label=(\roman*)]
	\item the \emph{global Markov property} with respect to $\varphi$ if for every three disjoint sets of outputs $\mathcal{X},\mathcal{Y}$, $\mathcal{Z} \subseteq \Out(\varphi)$ with $\In(\varphi) \subseteq \mathcal{Y} \cup \mathcal{Z}$:
		\begin{equation*}
			\mathcal{X} \text{ and } \mathcal{Y} \text{ are categorically $d$-separated by  } \mathcal{Z} \; \text{ in } \varphi \quad \Longrightarrow \quad \mathcal{X} \perp \mathcal{Y} \ | \ \mathcal{Z} \; \text{ in } f.
		\end{equation*} 
	\item the \emph{local Markov property} with respect to $\varphi$ if for every box $b$ in  $\varphi$, we have
		\begin{equation*}
			\Out(b) \perp \Dec(\Out(b))^{c} \setminus \In(b) \ | \ \In(b) \; \text{ in } f.
		\end{equation*} 
\end{enumerate}
\end{definition}

Note that in the special case of $\varphi$ arising from a DAG, the local and global Markov property agree with the classical definitions of Markov properties \cite[Theorem 1.2.6 and Theorem 1.2.7]{Pe09}.

\begin{theorem}
\label{thm:causalCompdSep}
Suppose that we are given the following:
\begin{itemize}
	\item $\cat{C}$ is a strict Markov category with conditionals.
	\item $\varphi$ is a pure bloom causal model over a hypergraph $\Sigma$ such that the boxes in $\varphi$ have distinct types in $\Sigma$.
	\item $f : \bigotimes_{i=1}^n W'_i \to \bigotimes_{j=1}^m V'_j$ is a morphism in $\cat{C}$.
\end{itemize}
Then the following statements are equivalent:
\begin{enumerate}[label=(\roman*)]
	\item\label{thm_compat} $f$ is compatible with the causal model $\varphi$.
	\item\label{thm_global} $f$ satisfies the global Markov property.
	\item\label{thm_local} $f$ satisfies the local Markov property.
\end{enumerate}
\end{theorem}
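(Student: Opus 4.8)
The plan is to establish the cyclic chain of implications \ref{thm_compat} $\Rightarrow$ \ref{thm_global} $\Rightarrow$ \ref{thm_local} $\Rightarrow$ \ref{thm_compat}. The first implication is essentially already in hand: if $f$ is compatible with $\varphi$, then \Cref{cor:dseparationCondIndependence} applied to each disjoint triple $\mathcal{X}, \mathcal{Y}, \mathcal{Z}$ with $\In(\varphi) \subseteq \mathcal{Y} \cup \mathcal{Z}$ for which $\mathcal{Z}$ $d$-separates $\mathcal{X}$ and $\mathcal{Y}$ yields $\mathcal{X} \perp \mathcal{Y} \mid \mathcal{Z}$, which is precisely the global Markov property.

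For \ref{thm_global} $\Rightarrow$ \ref{thm_local}, I would exhibit the local Markov property as a special instance of the global one. Fix a box $b$ and set $\mathcal{X} = \Out(b)$, $\mathcal{Z} = \In(b)$, and $\mathcal{Y} = \Dec(\Out(b))^{c} \setminus \In(b)$. These are pairwise disjoint (using acyclicity for $\Out(b) \cap \In(b) = \emptyset$) and satisfy $\In(\varphi) \subseteq \mathcal{Y} \cup \mathcal{Z}$, since by left monogamy a global input is produced by no box and hence is never a strict descendant of $\Out(b)$. It then suffices to check that $\mathcal{Z}$ categorically $d$-separates $\mathcal{X}$ from $\mathcal{Y}$. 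The key observation is that in the marginal $\varphi_{\mathcal{X}, \mathcal{Y}, \mathcal{Z}}$ every box strictly downstream of $b$ is discarded: its outputs are strict descendants of $\Out(b)$ and therefore, by acyclicity, cannot be ancestors of $\mathcal{X} \cup \mathcal{Y} \cup \mathcal{Z}$, so \Cref{lem:pureBloomDiscard} removes it. After additionally cutting $\mathcal{Z} = \In(b)$, the box $b$ retains no inputs and no surviving downstream neighbours, so $\Out(b)$ forms an isolated component, disconnected from $\mathcal{Y}$. The global Markov property then delivers $\Out(b) \perp \mathcal{Y} \mid \In(b)$, which is exactly the local Markov property.

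The substantive direction is \ref{thm_local} $\Rightarrow$ \ref{thm_compat}, where I must reconstruct a strict Markov functor $F$ from the conditional independences. First I would fix a total order of the boxes $b_1, \ldots, b_{k-1}$ together with the ancestrally closed chain $\In(\varphi) = \mathcal{W}_1 \subseteq \cdots \subseteq \mathcal{W}_k = \Out(\varphi)$ with $\mathcal{W}_{i+1} = \Out(b_i) \cup \mathcal{W}_i$, exactly as in \Cref{lem:dSepCondIndep}. Since $\cat{C}$ has conditionals, $f$ can be decomposed by generating the wires in this order, each step producing $\Out(b_i)$ from the previously generated wires $\mathcal{W}_i$ via a conditional. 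The point is that $\mathcal{W}_i \subseteq \Dec(\Out(b_i))^{c}$: a strict descendant of $\Out(b_i)$ lying in $\mathcal{W}_i$ would force $\Out(b_i) \subseteq \An(\mathcal{W}_i) = \mathcal{W}_i$, which is impossible. Hence $\mathcal{W}_i \setminus \In(b_i)$ sits inside the $\mathcal{Y}$-set of the local Markov independence for $b_i$, and restricting that independence to $\mathcal{W}_i \setminus \In(b_i)$ shows that the $i$th conditional depends only on $\In(b_i)$; I read off this localized morphism as $F(b_i)$. Because the boxes of $\varphi$ carry \emph{distinct types} in $\Sigma$, these choices are mutually unconstrained and assemble into a well-defined strict Markov functor with $F(\mathsf{type}(p(i))) = W'_i$ and $F(\mathsf{type}(q(j))) = V'_j$. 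An induction on $i$ along the chain $\mathcal{W}_i$, peeling off one box at a time as in \Cref{lem:dSepCondIndep}, then verifies $F(\varphi) = f$.

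The main obstacle lies in this last direction. The delicate points are: justifying the restriction step that shrinks the conditioned-against set $\Dec(\Out(b_i))^{c} \setminus \In(b_i)$ down to $\mathcal{W}_i \setminus \In(b_i)$ while keeping the conditional localized on $\In(b_i)$, which follows from the factorization shape of \Cref{def:condInd_diff} together with the rewriting of \Cref{rem:condInd} but must be tracked carefully because that conditional independence is asymmetric; and the copy-bookkeeping ensuring that the successively extracted local morphisms glue, along the topological order, into precisely the string-diagram shape of $\varphi$ rather than merely some factorization of $f$, which relies on the almost-sure uniqueness of conditionals to identify each localized morphism with the conditional appearing in the canonical decomposition of $f$. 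The \emph{distinct types} hypothesis is exactly what removes any consistency constraint among the $F(b_i)$, legitimizing the per-box extraction.
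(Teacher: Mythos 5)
Your argument is correct in outline and coincides with the paper's for the first two implications: \ref{thm_compat}$\Rightarrow$\ref{thm_global} is exactly \Cref{cor:dseparationCondIndependence}, and your \ref{thm_global}$\Rightarrow$\ref{thm_local} step spells out, correctly and via \Cref{lem:pureBloomDiscard}, the $d$-separation verification that the paper's proof merely asserts in one line. The genuine difference is in \ref{thm_local}$\Rightarrow$\ref{thm_compat}. You construct $F$ \emph{forwards} along a topological order, first decomposing $f$ into successive conditionals and then using the local Markov property to localize the $i$th conditional onto $\In(b_i)$. The paper instead runs the induction \emph{backwards} on the number of boxes: it peels off a final box $b$ (one with $\Dec(\Out(b))=\Out(b)$), reads $F(b)$ directly off the factorization that \Cref{def:condInd_diff} supplies for the local Markov independence of $b$, and recurses on the remaining model $\psi$. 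Both routes work, but the paper's buys two things. First, it never invokes the existence of conditionals in this direction---the remark following the theorem stresses this, since it is what makes \ref{thm_global}/\ref{thm_local}$\Rightarrow$\ref{thm_compat} valid in categories without conditionals such as $\cat{Stoch}$; your version consumes conditionals here, although this is repairable, because the local Markov property is by \Cref{def:condInd_diff} already an \emph{existence of a factorization} with the conditional localized on $\In(b_i)$, so the detour through general conditionals is unnecessary. Second, by working with the factorization handed to you by the independence rather than comparing it against an independently constructed conditional, the paper sidesteps the almost-sure-uniqueness bookkeeping that you rightly single out as the delicate point of your gluing step. Finally, note that your telescoping bottoms out at $f_{\mathcal{W}_1}=f_{\In(\varphi)}$, whose identification with the copy structure on the global inputs is exactly as implicit in your write-up as it is in the paper's ``$k=1$ is trivial'' base case; this is not a gap relative to the paper, but your formulation makes that step visible and it deserves a sentence.
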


\begin{proof}
	$\ref{thm_compat} \Longrightarrow \ref{thm_global}$: The global Markov property is precisely the statement of \Cref{cor:dseparationCondIndependence}.

	$\ref{thm_global} \Longrightarrow \ref{thm_local}$: This follows from the fact that $\Dec(\Out(b))^{c}$ and $\Out(b)$ are $d$-separated by $\In(b)$, and $\In(\varphi) \subseteq \Dec(\Out(b))^{c}$, which makes the global Markov property specialize to the local one.

	$\ref{thm_local} \Longrightarrow \ref{thm_compat}$: We prove this statement by induction over the number of boxes $k \coloneqq |B(G)|$. The case $k = 1$ is trivial. For the step from $k$ to $k+1$, let $b$ be a final box in $\varphi$, which means that $\Dec(\Out(b)) = \Out(b)$. Then, $\varphi$ factorizes as
\begin{equation*}
\varphi \quad = \quad \tikzfig{figures/proofLocalMarkov0}
\end{equation*} 
	where $\psi$ is another causal model satisfying all of our assumptions, and no box in $\psi$ has the same type in $\Sigma$ as $b$ does.

	In order to construct a functor $F$ as in \Cref{def:compatibility}, note first that it must satisfy~\eqref{F_wires_map}, which already lets us write the domain of $f$ as $\In(\varphi)'$, and similarly for the codomain.
Since $f$ satisfies the local Markov property with respect to $b$, we can decompose $f$ by \Cref{def:condInd_diff} as
\begin{equation} 
\label{eq:proofLocalMarkov1} \tikzfig{figures/proofLocalMarkov1} \quad = \quad \tikzfig{figures/proofLocalMarkov4a} 
\end{equation}
By induction hypothesis, we have that $g$ is compatible with $\psi$ since $g$ satisfies the local Markov properties specified by $\psi$. Since the box $b$ appears only once in $\varphi$, we can freely define the action of the functor $F$ on $b$ as $F(b) \coloneqq h$. Then, we obtain
\begin{equation*}
\tikzfig{figures/proofLocalMarkov3} \quad = \quad \tikzfig{figures/proofLocalMarkov4a} \quad = \quad \tikzfig{figures/proofLocalMarkov4} \quad = \quad \tikzfig{figures/proofLocalMarkov5}
\end{equation*} 
where we use in the first step Equation \eqref{eq:proofLocalMarkov1} and in the last the fact that $F$ is a Markov functor.
\end{proof}

\begin{remark}
\begin{enumerate}[label=(\roman*)]
	\item Note that we have used the assumption that $\cat{C}$ has conditionals only for the implication $\ref{thm_compat} \Longrightarrow \ref{thm_global}$. Therefore, for an arbitrary strict Markov category, the global as well as the local Markov property is a sufficient condition for the compatibility of a morphism with a generalized causal model (satisfying our assumptions). However, these Markov properties implicitly require the existence of certain conditionals. Consider, for example, the generalized causal model
\begin{equation*}
\varphi \quad = \quad \tikzfig{figures/emptySeparationExample}
\end{equation*} 
where all boxes are of distinct types.
Choosing $\mathcal{X} = \{M\}, \mathcal{Y} = \emptyset$ and $\mathcal{Z} = \{X, Y\}$, a morphism $f$ satisfying the global Markov property displays in particular the conditional independence $\{M\} \perp \emptyset \ | \ \mathcal{Z}$, pictorially:
\begin{equation*}
f \quad = \quad \tikzfig{figures/emptySeparationExample2}
\end{equation*} 
This shows that the conditional $f_{|_\mathcal{Z}}$ exists, and this recovers the box that outputs $M$ (up to almost sure equality).
	\item \Cref{thm:causalCompdSep} shows that $d$-separation correctly detects causal compatibility for the Markov categories $\cat{FinStoch}$, $\cat{Gauss}$ or $\cat{BorelStoch}$. For the Markov category $\cat{Stoch}$, which does not have conditionals, the global and local Markov properties are at least sufficient for the compatibility since our proof of these implications has not used conditionals.
	\item Note that \Cref{thm:causalCompdSep} only applies to causal models where each box appears at most once in the model (which in particular implies that $\varphi$ has no nontrivial symmetries). However, the implication $\ref{thm_compat} \Longrightarrow \ref{thm_global}$ applies to arbitrary generalized causal models as proven in \Cref{cor:dseparationCondIndependence}. 
\end{enumerate}
\end{remark}

\begin{example}
\label{ex:dSepExamples}
We now present three examples that go beyond the classical $d$-separation criterion. In (i) we study a causal structure that does not arise from a DAG, in (ii) we study a DAG causal structure with continuous and possibilistic variables, and in (iii) we study a causal structure with inputs with deterministic variables.
\begin{enumerate}[label=(\roman*)]
	\item\label{ex:dSepExamplesI} Let $\varphi$ be the causal structure
	\begin{equation*}
	\tikzfig{figures/DAGImpossibleScenarioConcrete} 
	\end{equation*} 
	and let $\cat{C}$ be a strict Markov category with conditionals. By \Cref{thm:causalCompdSep}, a morphism $t : I \to X' \otimes Z'_1 \otimes Z'_2 \otimes Y'$ in $\cat{C}$ is compatible with this structure if and only if it satisfies
	\begin{equation*}
	X \perp \{Y, Z_2\} \ | \ Z_1 \quad \text{ and } \quad Y \perp \{X, Z_1\} \ | \ Z_2
	\end{equation*} 
	For a general class of examples, consider $X' = Z'_2$ and $Y' = Z'_1$ and any morphism in $\cat{C}$ of the form
	\begin{equation*}
	\tikzfig{figures/BipartiteProbability00}
	\end{equation*} 
	We claim that such a distribution is compatible with $\varphi$ if and only if there exist morphisms $d$ and $d'$ such that
	\begin{equation}
	\label{eq:deterministicCompatibility} \tikzfig{figures/BipartiteProbability} \quad = \quad \tikzfig{figures/BipartiteProbabilityDeterministicMorphism} \quad = \quad \tikzfig{figures/BipartiteProbabilityDeterministicMorphism2}
	\end{equation} 
	where $s$ is the first marginal of $r$, and similarly $d' : Z'_2 \to Z'_1$ satisfies the same equations the other way around.
	Here, the second equation states that the morphism $d$ is $s$-a.s.~deterministic \cite[Definition 13.11]{Fr20}, and similarly for $d'$.
	
	Indeed, assuming compatibility we have that
	\begin{equation} 
	\label{eq:deterministic1}
	\tikzfig{figures/BipartiteProbability01} \quad = \quad \tikzfig{figures/BipartiteProbability02}
	\end{equation} 
	which shows the first equality in Equation \eqref{eq:deterministicCompatibility}. For the second equality we have that
	\begin{equation*}
	\tikzfig{figures/BipartiteProbability03} \quad = \quad \tikzfig{figures/BipartiteProbability04} \quad = \quad \tikzfig{figures/BipartiteProbability05} 
	\end{equation*}
	where we have used the conditional independence $Z_2 \perp X \ | \ Z_1$ in the first step and Equation \eqref{eq:deterministic1} in the second step. Since the morphism is symmetric with respect to permutations of the output wires $X$ and $Z_2$, we have $a = F(f)$ $s$-a.s. which shows the second equality in Equation \eqref{eq:deterministicCompatibility}.
	Proving the existence of $d'$ works analogously by interchanging the roles of $X$ and $Y$ as well as $Z_1$ and $Z_2$.	
	
	Conversely, we have
	\begin{equation*}
	 \tikzfig{figures/BipartiteProbability00} \quad = \quad \tikzfig{figures/BipartiteProbability06} \quad = \quad \tikzfig{figures/BipartiteProbability07} \quad = \quad \tikzfig{figures/BipartiteProbability08}
	 \end{equation*} 	
	where we have used the assumption that $d$ is $s$-a.s.~deterministic in the second equation. Repeating this calculation interchanging the roles of $Z_1$ and $Z_2$ as well as $X$ and $Y$ shows the statement.

	\item Consider the instrumental scenario, given by the DAG

\begin{equation*}
\begin{tikzpicture}[scale=0.75]
	\draw[fill=white, line width=0.2mm] (0,0) circle (0.8cm) node{\scriptsize $A$};
	\draw[fill=white, line width=0.2mm] (6,0) circle (0.8cm) node{\scriptsize $B$};
	\draw[fill=white, line width=0.2mm] (3,3) circle (0.8cm) node{\scriptsize $\Lambda$};
	\draw[fill=white, line width=0.2mm] (0,4) circle (0.8cm) node{\scriptsize $X$};
	
	\draw[-stealth,line width=0.2mm] (0,3) -- (0,1);
	\draw[-stealth,line width=0.2mm] (1,0) -- (5,0);
	\draw[-stealth,line width=0.2mm, rotate around={45:(3,3)}] (3,2) -- (3,-0.2);
	\draw[-stealth,line width=0.2mm, rotate around={-45:(3,3)}] (3,2) -- (3,-0.2);
\end{tikzpicture} 
\end{equation*}
This has been previously studied mainly in the context of DAGs with latent variables \cite{Pe95, Bo01}. For our analysis, we assume each variable to be observed, which means that the causal structure reads string diagrammatically as
\begin{equation} 
\label{eq:PearlStringDiagram}
\varphi \quad = \quad \tikzfig{figures/InstrumentalScenario}
\end{equation} 
There are two non-trivial $d$-separations:
\begin{enumerate}[label=(\alph*)]
	\item Between $X$ and $B$ by $\{A, \Lambda\}$,
	\item Between $X$ and $\Lambda$.
\end{enumerate} 
Therefore, \Cref{thm:causalCompdSep} implies that a distribution $P$ on a four-fold tensor product object in a Markov category with conditionals is compatible with $\varphi$ if and only if $X \perp B \ | \ A, \Lambda$ and $X \perp \Lambda$. 

In $\cat{BorelStoch}$, this means that $P$ is compatible with $\varphi$ if and only if 
\begin{align*}
& P(X \in E_1, A \in E_2, B \in E_3, \Lambda \in E_4) \\ &= \int_{E_2} \int_{E_4} P_{X\, | \, A, \Lambda}(X \in E_1\, | \, a, \lambda) \, P_{B\, | \, A, \Lambda}(B \in E_3\, | \, a, \lambda) \, P_{A,\Lambda}(\mathrm{d} a, \mathrm{d}\lambda)
\end{align*} 
and 
\begin{align*}
P(X \in E_1, \Lambda \in E_4) = P(X \in E_1) \cdot P(\Lambda \in E_4)
\end{align*} 
where $E_i$ are measurable sets in the Borel $\sigma$-algebras of the spaces $X', A', B'$ and $\Lambda'$.

For simplicity, assume that all random variables take values in $\mathbb{R}$ and are absolutely continuous, i.e.~there exists a density $f: X' \times A' \times B' \times \Lambda' \to [0,\infty)$ such that
\begin{equation*}
P(X \in E_1, A \in E_2, B \in E_3, \Lambda \in E_4) = \int_{E_1 \times E_2 \times E_3 \times E_4} f(x,a,b,\lambda) \diff x \diff a \diff b \diff \lambda 
\end{equation*} 

The causal compatibility now amounts to the following two conditions:
\begin{enumerate}[label=(\alph*)]
	\item $X \perp \Lambda$, i.e.
		\begin{equation}
			\label{eq:Ind}
			f_{X,\Lambda}(x,\lambda) = f_{X}(x) \cdot f_{\Lambda}(\lambda) \quad \text{ a.e.}
		\end{equation}
		where a.e.~means almost everywhere with respect to the Lebesgue measure on $\mathbb{R}$.
	\item $X \perp B \ | \ A, \Lambda$, i.e.\
	\begin{equation}
	\label{eq:condInd} 
	f(x,a,b,\lambda) = f_{X\, | \, A,\Lambda}(x,a,\lambda) \cdot f_{A,\Lambda}(a,\lambda) \cdot f_{B\, | \, A,\Lambda}(b,a,\lambda) \quad \text{ a.e.},
	\end{equation}
\end{enumerate}
where the conditional densities are defined implicitly by
\[
f_{A\, | \, X,\Lambda}(a\, | \, x,\lambda) \cdot f_{X,\Lambda}(x,\lambda) = f_{X,A,\Lambda}(x,a,\lambda) \quad \text{ a.e.}
\]
Combining Eq.~\eqref{eq:Ind} and Eq.~\eqref{eq:condInd} results in
\begin{equation*}
f(x,a,b,\lambda) = f_{\Lambda}(\lambda) \cdot f_{X}(x) \cdot f_{A\, | \, X,\Lambda}(a,x,\lambda) \cdot f_{B\, | \, A,\Lambda}(b,a,\lambda) \quad \text{ a.e.}
\end{equation*} 
which is the usual factorization condition for compatibility with the causal structure in \eqref{eq:PearlStringDiagram}.

In $\cat{FinSetMulti}$, the same observation implies that a morphism $$f: I \to X' \otimes A' \otimes B' \otimes \Lambda'$$ is compatible with the causal structure $\varphi$
if and only if the possibility of $X$ and $Y$ can be independently determined, knowing the outcomes of $A$ and $\Lambda$, i.e.\
$$f(x,y \, | \, a, \lambda) = f(x \, | \, a, \lambda) \cdot f(y \, | \, a, \lambda)$$
and the possibilities of $X$ and $\Lambda$ are independent, i.e.\ $f(x,\lambda) = f(x) \cdot f(\lambda)$.

\item Consider the following string diagram

$$ \varphi \quad = \quad \tikzfig{figures/deterministicStringDiagram} $$

A morphism $f$ in $\cat{FinSet}$ is compatible with $\varphi$ if the variables $z \in Z, x \in X$ and $y \in Y$ and  arise via functions $g: C \to Z$, $h: A \times Z \to X$, and $k: B \times Z \to Y$, i.e.\
$$ z = g(c), \quad x = h(a, z) \quad \text{and} \quad y = k(b,z).$$

By \Cref{thm:causalCompdSep}, $f$ is compatible with $\varphi$ if the following conditions are satisfied (due to the local Markov property):
\begin{enumerate}[label=(\alph*)]
	\item\label{CondItemA} $Z \perp \{A, B\} \ | \ C$
	\item\label{CondItemB} $X \perp \{B,C\} \ | \ \{A, Z\}$
	\item\label{CondItemC} $Y \perp \{A, C\} \ | \ \{B, Z\}$
\end{enumerate}
By \Cref{ex:CondIndSet}, Condition \ref{CondItemA} implies the existence of a function $g$ such that $z = g(c)$, Condition \ref{CondItemB} implies $x = h(a,z)$, and Condition \ref{CondItemC} implies $y = k(a,z)$. But this directly shows the compatibility of $f$ with the causal model $\varphi$. The local Markov property immediately implies causal compatibility in $\cat{FinSet}$ since the construction of conditionals (see \Cref{prop:SetMultiConditionals}) is trivial.
\end{enumerate}
\end{example}

\begin{question}
	Can \Cref{thm:causalCompdSep} be extended to more general causal models? In particular, what about allowing the same box to appear several times in $\varphi$?
\end{question}

\bibliographystyle{abbrv}
\bibliography{Bibliography}

\end{document}